\definecolor{shadecolor}{gray}{0.90}				
\def\boitegrise#1#2{\begin{centerline}{\fcolorbox{black}{shadecolor}{~
    \begin{minipage}[t]{#2}{\vphantom{~}#1\vphantom{$A_{\displaystyle{A_A}}$}}
            \end{minipage}~}}\end{centerline}\medskip}
\newcommand{\seqnum}[1]{\href{https://oeis.org/#1}{\rm \underline{#1}}}
\newcommand{\pmodd}[1]{\!\!\!\!\pmod{#1}}
\newcommand{\FibSum}{\mathcal{F}(k)}
\newcommand{\LucSum}{\mathcal{L}(k)}
\newcommand{\GibSum}{\mathcal{G}_{G_0, G_1}\!(k)}
\newcommand{\GibSumPrime}{\mathcal{G}_{G_0^\prime, G_1^\prime}\!(k)}
\newcommand{\Gisano}{\pi_{G_0,G_1}\!(m)}
\newcommand{\GisanoP}{\pi_{G_0,G_1}\!(p)}
\newcommand{\GibSeq}{\left(G_n\right)_{n \geq 0}}
\newcommand{\GibDelta}{\delta_{G_0,G_1}}
\newcommand{\Dfunc}{D_{G_n,G_{n+1}}}
\newcommand{\Dinitial}{D_{G_0,G_1}}
\newcommand{\GibSumSquared}{\mathcal{G}_{G_0, G_1}^2\!(k)}
\newcommand{\LucSumSquared}{\mathcal{L}^2(k)}
\newcommand{\FibSumSquared}{\mathcal{F}^2(k)}
\newcommand{\red}[1]{\textcolor{red}{#1}}
\newcommand{\blue}[1]{\textcolor{blue}{#1}}
\DeclareMathOperator{\lcm}{lcm}
\begin{document}


\theoremstyle{plain}
\newtheorem{theorem}{Theorem}[section]
\newtheorem{corollary}[theorem]{Corollary}
\newtheorem{lemma}[theorem]{Lemma}
\newtheorem{proposition}[theorem]{Proposition}
\newtheorem{conjecture}[theorem]{Conjecture}

\theoremstyle{definition}
\newtheorem{definition}[theorem]{Definition}
\newtheorem{example}[theorem]{Example}
\newtheorem{question}[theorem]{Question}
\newtheorem{convention}[theorem]{Convention}

\theoremstyle{remark}
\newtheorem{remark}[theorem]{Remark}

\numberwithin{equation}{section}

\begin{center}
\vskip 1cm{\LARGE\bf\textcolor{red}{
GCD of sums of $k$ consecutive Fibonacci, Lucas, and generalized Fibonacci numbers
}}
\vskip 1cm
\large
Dan Guyer and aBa Mbirika\\
Department of Mathematics\\
University of Wisconsin-Eau Claire\\
Eau Claire, WI 54702\\
USA \\
\href{mailto:guyerdm7106@uwec.edu}{\tt guyerdm7106@uwec.edu}\\
\href{mailto:mbirika@uwec.edu}{\tt mbirika@uwec.edu}
\end{center}

\vskip .2in

\begin{abstract}
We explore the sums of $k$ consecutive terms in the generalized Fibonacci sequence $\left(G_n\right)_{n \geq 0}$ given by the recurrence $G_n = G_{n-1} + G_{n-2}$ for all $n \geq 2$ with integral initial conditions $G_0$ and $G_1$. In particular, we give precise values for the greatest common divisor (GCD) of all sums of $k$ consecutive terms of $\left(G_n\right)_{n \geq 0}$. When $G_0 = 0$ and $G_1 = 1$, we yield the GCD of all sums of $k$ consecutive Fibonacci numbers, and when $G_0 = 2$ and $G_1 = 1$, we yield the GCD of all sums of $k$ consecutive Lucas numbers. Denoting the GCD of all sums of $k$ consecutive generalized Fibonacci numbers by the symbol $\mathcal{G}_{G_0, G_1}\!(k)$, we give two tantalizing characterizations for these values, one involving a simple formula in $k$ and another involving generalized Pisano periods:
\begin{align*}
    \mathcal{G}_{G_0, G_1}\!(k) &= \gcd(G_{k+1}-G_1,\, G_{k+2}-G_2)\; \mbox{and}\\
    \mathcal{G}_{G_0, G_1}\!(k) &= \mathrm{lcm}\{m \mid \pi_{G_0,G_1}\!(m) \text{ divides } k\},
\end{align*}
where $\pi_{G_0,G_1}\!(m)$ denotes the generalized Pisano period of the generalized Fibonacci sequence modulo $m$. The fact that these vastly different-looking formulas coincide leads to some surprising and delightful new understandings of the Fibonacci and Lucas numbers.
\end{abstract}

\vfill

\begin{center}
    \hrulefill
\end{center}

\noindent\textcolor{blue}{\small
\textbf{NOTE}: This version of the paper is almost identical to the version that appears in the \textit{Journal of Integer Sequences}, Vol. 24 (2021), Article 21.9.8, which you can find at
\begin{center}
\url{https://cs.uwaterloo.ca/journals/JIS/VOL24/Mbirika/mb5.html}.
\end{center}
It is different from the published version in the following two respects:
\begin{itemize}
    \item This arXiv version includes a Table of Contents.
    \item The \textit{J. Integer Seq.} version does not number theorems/lemmas/propositions/etc.~by section, whereas this arXiv version does.
\end{itemize}
Those two changes may make this arXiv version easier to navigate.
}

\newpage
\tableofcontents


\section{Introduction}\label{sec:introduction}

In the inaugural issue of the \textit{Fibonacci Quarterly} in 1963, I.~D.~Ruggles proposed the following problem in the Elementary Problems section: ``Show that the sum of twenty consecutive Fibonacci numbers is divisible by the $10^{\mathrm{th}}$ Fibonacci number $F_{10}=55$.''~\cite{Ruggles1963} Since the Ruggles problem, there have been numerous papers studying sums of consecutive Fibonacci numbers or Lucas numbers~\cite{Iyer1969, Tekcan2007, Tekcan2008, Demirturk2010, Cerin2013, Shtefan2018}. However, with all this work on consecutive sums of Fibonacci and Lucas numbers, one related topic seems to be missing from the literature, namely that of the greatest common divisor (GCD) of sums of Fibonacci and Lucas numbers. That being said, the On-Line Encyclopedia of Integer Sequences (OEIS) does have two entries, \seqnum{A210209} and \seqnum{A229339}, which give the GCDs of the sums of $k$ consecutive Fibonacci (respectively, Lucas) numbers~\cite{Sloane-OEIS}. But in those entries, no references are given to any existing papers in the literature providing rigorous proofs that confirm these sequences. More precisely, two references are given in the entry \seqnum{A210209} but they appear to have little connection to the actual sequence, and the entry \seqnum{A229339} contains no references at all. Our paper serves to fill this deficiency in the literature.

Motivated by the Ruggles problem, we observed the surprising fact that not only is the sum of any twenty consecutive Fibonacci numbers divisible by  $F_{10}$, but also that $F_{10}$ is the greatest of all the divisors of these sums. This became a main motivation for us to explore sums of any finite length of consecutive Fibonacci numbers, then for Lucas numbers, and then eventually for all possible generalized Gibonacci sequences. Appearing in the literature as early as 1901 by Tagiuri~\cite{Tagiuri1901}, the generalized Fibonacci numbers (or so-called \textit{Gibonacci numbers}\footnote{Thomas Koshy attributes Art Benjamin and Jennifer Quinn for coining this term ``Gibonacci'' in their 2003 book \textit{Proofs that Really Count: The Art of Combinatorial Proof}~\cite{Benjamin2003}.}) are defined by the recurrence
$$G_i=G_{i-1}+G_{i-2} \ \text{for all} \ i\geq 2$$
with initial conditions $G_0,G_1 \in \mathbb{Z}$. We examine the GCD of the sums of $k$ consecutive Gibonacci numbers, and consequently $k$ consecutive Fibonacci and Lucas numbers. More precisely, given $k \in \mathbb{N}$ we explore the GCD of an infinite number of finite sums
$$ \sum_{i=1}^k G_i \;,\;\; \sum_{i=2}^{k+1} G_i \;,\;\; \sum_{i=3}^{k+2} G_i \;,\;\; \ldots$$
That is, we compute the GCD of the terms in the sequence $\left( \sum_{i=0}^{k-1} G_{n+i}\right)_{n \geq 1}$. By a slight abuse of notation, we write this value as $\gcd\left\lbrace \left( \sum_{i=0}^{k-1} G_{n+i} \right)_{n \geq 1}\right\rbrace$.

\boitegrise{
\begin{convention}
For brevity, we use the symbols $\FibSum$, $\LucSum$, and $\GibSum$, respectively, to denote the three values
$$ \gcd\left\lbrace \left( \sum_{i=0}^{k-1}F_{n+i} \right)_{n \geq 1}\right\rbrace, \; \gcd\left\lbrace \left( \sum_{i=0}^{k-1}L_{n+i} \right)_{n \geq 1}\right\rbrace, \text{ and } \gcd\left\lbrace \left( \sum_{i=0}^{k-1}G_{n+i} \right)_{n \geq 1}\right\rbrace. $$
For reasons to be explained in Theorem~\ref{thm:relatively_prime_initial_values_only} and Convention~\ref{conv:use_only_relatively_prime_initial_values}, it suffices to only consider Gibonacci sequences with relatively prime initial conditions $G_0$ and $G_1$.
\vspace{-.3in}
\end{convention}}{0.9\textwidth}

\begin{remark}
Observe that when $G_0=0$ and $G_1=1$ we have $\GibSum = \FibSum$, and when $G_0=2$ and $G_1=1$ we have $\GibSum = \LucSum$. Hence in the symbols $\FibSum$ and $\LucSum$, we suppress writing the initial values since those are well known in the Fibonacci and Lucas setting.
\end{remark}

To compute $\GibSum$, we establish two very different yet equivalent characterizations for this value. One is a simple formula in $k$, namely $\GibSum = \gcd(G_{k+1}-G_1, G_{k+2}-G_2)$. Another is a formula utilizing the generalized Pisano period $\Gisano$ of the Gibonacci sequence modulo $m$, namely $\GibSum = \lcm\{m \mid \Gisano \text{ divides } k\}$. We summarize our main results in Table~\ref{table: main results}. 
\begin{table}[!h]
\renewcommand{\arraystretch}{1.7}
\begin{center}
 \begin{tabular}{|c||c|c|c|c|}
 \hline
 $k$ & \;$\FibSum$\; & \;$\LucSum$\; & $\GibSum$ & Proof in this paper \\ 
 \hline\hline
 $k\equiv 0,4,8 \pmod{12}$ & $F_{k/2}$ & $5F_{k/2}$ & $F_{k/2}^{\,\red a}$ \text{or} $5F_{k/2}^{\,\red b}$ & 
Theorem~\ref{thm: GibSum result for k cong 0,4,8}\\ 
 \hline
 $k\equiv 2,6,10 \pmod{12}$ & $L_{k/2}$ & $L_{k/2}$ & $L_{k/2}$ & 
Theorem~\ref{thm: GibSum result for k cong 2,6,10}\\
 \hline
 $k\equiv 3,9 \pmod{12}$ & 2 & 2 & $2^{\,\red c}$ & 
Theorem~\ref{thm: GibSum result for k cong 3,9}\\
 \hline
 $k\equiv 1,5,7,11 \pmod{12}$ & 1 & 1 & $1^{\,\red c}$ & 
Theorem~\ref{thm: GibSum result for k cong 1,5,7,11}\\
 \hline
\end{tabular}
\caption{Summary of our main results}
\label{table: main results}
\end{center}
\end{table}
\vspace{-.4in}
\begin{center}
\footnotesize{\red{$^a$} This value holds if and only if $\gcd(G_0 + G_2, G_1 + G_3) = 1$.\\
\red{$^b$} This value holds if and only if $\gcd(G_0 + G_2, G_1 + G_3) \neq 1$.\\
\red{$^c$}  These values hold if $G_1^2 - G_0 G_1 - G_0^2 = \pm 1$. The case when\\ $G_1^2 - G_0 G_1 - G_0^2 \neq \pm 1$ is addressed in Section~\ref{sec:interesting_applications}} 

\end{center}

The paper is broken down as follows. In Section~\ref{sec:definitions}, we give a brief overview of necessary definitions and identities; in particular, we prove a few known results whose proofs seem to be missing in the literature. In Section~\ref{sec:two_equivalent_formulas_for_GibSum}, we provide proofs of our two characterizations for $\GibSum$. In Sections~\ref{sec:main_results_even_case} and \ref{sec:main_results_odd_case}, we prove our main results for the values $\GibSum$ when $k$ is even and odd, respectively. In Section~\ref{sec:interesting_applications}, we explore three tantalizing applications of our $\GibSum$ characterizations. Finally, in Section~\ref{sec:open questions}, we provide five open questions motivated by results in this paper.


\section{Definitions and preliminary identities}\label{sec:definitions}

Many results in this section are well known, and we provide references to where a proof of each result can be found. Some other lesser ``well-known'' results have no proofs in the literature as far as we have exhaustively searched, and for those results we do provide our own proofs. We use the convention of denoting these well-known results as propositions.

\begin{definition}
The \textit{generalized Fibonacci sequence} $\left(G_n\right)_{n \geq 0}$ is defined by the recurrence relation
\begin{equation*}
G_n = G_{n-1} + G_{n-2}
\end{equation*}
for all $n \geq 2$ and with arbitrary initial conditions $G_0,G_1 \in \mathbb{Z}$. The \textit{Fibonacci sequence} $\left(F_n\right)_{n \geq 0}$ is recovered when $G_0=0$ and $G_1=1$, and the \textit{Lucas sequence} $\left(L_n\right)_{n \geq 0}$ is recovered when $G_0=2$ and $G_1=1$. For brevity, we use the term \textit{Gibonacci sequence} to refer to any generalized Fibonacci sequence.
\end{definition}

The following closed form expression for the Fibonacci sequence in Proposition~\ref{prop:Fibonacci_alpha_beta} was derived and first published by Jacques Binet in 1843, but it was known at least a century earlier by Abraham de Moivre in 1718. We include this proposition and the related Proposition~\ref{prop:Lucas_alpha_beta} that follows it because we use them to prove Identities~\eqref{eq:Koshy_1}, \eqref{eq:Koshy_2}, and \eqref{eq:Koshy_3} in Lemma~\ref{lem:Koshy_identities}. In these two propositions, we set $\alpha := \frac{1+\sqrt{5}}{2}$ and $\beta := \frac{1-\sqrt{5}}{2}$.

\begin{proposition}\label{prop:Fibonacci_alpha_beta}
For $n \in \mathbb{Z}$, the Fibonacci number $F_n$ has the closed form
$$F_n = \frac{\alpha^n - \beta^n}{\alpha - \beta}.$$
\end{proposition}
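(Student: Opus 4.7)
The plan is to proceed by strong induction on $n$, exploiting the fact that $\alpha$ and $\beta$ are the two roots of the characteristic polynomial $x^2 - x - 1$ of the Fibonacci recurrence. First I would record the identities $\alpha^2 = \alpha + 1$ and $\beta^2 = \beta + 1$, and multiply each by $\alpha^{n-2}$ and $\beta^{n-2}$ respectively to deduce $\alpha^n = \alpha^{n-1} + \alpha^{n-2}$ and $\beta^n = \beta^{n-1} + \beta^{n-2}$. Consequently, any expression of the form $T_n := A\alpha^n + B\beta^n$ automatically satisfies the Fibonacci recurrence $T_n = T_{n-1} + T_{n-2}$, regardless of the constants $A$ and $B$.

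Next I would pin down $A$ and $B$ by matching the initial conditions of the Fibonacci sequence. Requiring $T_0 = F_0 = 0$ and $T_1 = F_1 = 1$ yields the $2 \times 2$ linear system $A + B = 0$ and $A\alpha + B\beta = 1$. Since $\alpha - \beta = \sqrt{5} \neq 0$, this system has the unique solution $A = 1/(\alpha - \beta)$ and $B = -1/(\alpha - \beta)$, giving the candidate closed form $T_n = (\alpha^n - \beta^n)/(\alpha - \beta)$. A one-line strong induction for $n \geq 2$, with base cases $n = 0,1$ already built in, then shows $F_n = T_n$ for every $n \in \mathbb{N}$.

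To extend the identity to negative $n$, I would rewrite the recurrence as $F_{n-2} = F_n - F_{n-1}$ and use this to define (or recover) $F_n$ for $n < 0$. The closed form $T_n$ satisfies the same backward recurrence for the same reason as above, so once $T_n = F_n$ holds at two consecutive indices (say $n = 0, 1$), it holds for all $n \in \mathbb{Z}$ by induction in both directions. There is no real obstacle here; the only point worth attention is confirming $\alpha \neq \beta$ so that $A$ and $B$ are well defined, and ensuring the recurrence is run in both directions to cover negative indices.
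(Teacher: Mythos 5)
Your proof is correct. The paper itself offers no proof of this proposition---it simply records Binet's formula as a classical fact (attributing it to Binet, 1843, and de Moivre, 1718)---so there is nothing to compare against; your argument is the standard one. The two ingredients you use are exactly right: since $\alpha$ and $\beta$ are the distinct roots of $x^2 - x - 1$, every sequence $A\alpha^n + B\beta^n$ satisfies the Fibonacci recurrence in both directions (here it matters that $\alpha\beta = -1 \neq 0$, so the backward step $T_{n-2} = T_n - T_{n-1}$ is legitimate at every integer index), and matching the two initial values $F_0 = 0$, $F_1 = 1$ determines $A = -B = 1/(\alpha-\beta)$ uniquely because $\alpha - \beta = \sqrt{5} \neq 0$. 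Two-sided induction from the base indices $n=0,1$ then gives the identity for all $n \in \mathbb{Z}$, which is the full strength the paper needs (it invokes the formula at negative indices such as $F_{-1}$ in Lemma~\ref{lem:Koshy_identities}).
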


\begin{proposition}\label{prop:Lucas_alpha_beta}
For $n \in \mathbb{Z}$, the Lucas number $L_n$ has the closed form
\begin{equation*}
L_n = \alpha^n + \beta^n.
\end{equation*}
\end{proposition}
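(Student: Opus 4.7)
The plan is to mimic the standard derivation of Binet's formula for the Fibonacci numbers (Proposition~\ref{prop:Fibonacci_alpha_beta}), exploiting that $\alpha$ and $\beta$ are the two roots of the characteristic polynomial $x^2 - x - 1$ of the common recurrence $a_n = a_{n-1} + a_{n-2}$. Concretely, define the auxiliary sequence $T_n := \alpha^n + \beta^n$ and show $T_n = L_n$ for all $n \in \mathbb{Z}$.

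First I would verify the two base cases. Direct computation gives
\[
T_0 = \alpha^0 + \beta^0 = 2 = L_0 \qquad \text{and} \qquad T_1 = \alpha + \beta = \tfrac{1+\sqrt 5}{2} + \tfrac{1-\sqrt 5}{2} = 1 = L_1.
\]
Next I would show that $T_n$ satisfies the Lucas recurrence $T_n = T_{n-1} + T_{n-2}$. Because $\alpha^2 = \alpha + 1$, multiplying by $\alpha^{n-2}$ yields $\alpha^n = \alpha^{n-1} + \alpha^{n-2}$, and analogously for $\beta$; adding these two identities proves the recurrence. An ordinary induction on $n \geq 0$ then forces $T_n = L_n$ for all nonnegative $n$.

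To handle negative indices, I would use the key algebraic fact $\alpha\beta = -1$, which gives $\alpha^{-1} = -\beta$ and $\beta^{-1} = -\alpha$. Hence $T_{-n} = \alpha^{-n} + \beta^{-n} = (-1)^n(\beta^n + \alpha^n) = (-1)^n T_n$, and one checks directly that the Lucas numbers extended via $L_{n-2} = L_n - L_{n-1}$ satisfy the same symmetry $L_{-n} = (-1)^n L_n$; combined with the agreement on $\{0,1\}$ and the recurrence running both directions, this extends the equality $T_n = L_n$ to all $n \in \mathbb{Z}$.

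There is essentially no obstacle here: the argument is the textbook closed-form derivation from the characteristic roots, and the only point demanding any care is the passage to negative $n$, which is handled cleanly by the identity $\alpha\beta = -1$. Since Proposition~\ref{prop:Fibonacci_alpha_beta} was stated for all $n \in \mathbb{Z}$ and presumably proved the same way, this proof is parallel in structure and can be stated quite briefly.
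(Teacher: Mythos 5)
Your proof is correct and complete. The paper itself offers no proof of this proposition: it is stated as a classical fact (the Lucas analogue of the Binet/de Moivre formula), with the blanket remark that the well-known results of Section 2 are recorded as propositions. Your argument is the standard one via the characteristic roots of $x^2-x-1$: the base cases $T_0=2$, $T_1=1$, the forward recurrence from $\alpha^2=\alpha+1$ and $\beta^2=\beta+1$, and the extension to negative indices. The one point that genuinely requires care --- that the identity holds for all $n\in\mathbb{Z}$, not just $n\geq 0$ --- you handle correctly, either via the symmetry $T_{-n}=(-1)^nT_n$ coming from $\alpha\beta=-1$ or, more simply, by running the recurrence backward as $a_{n-2}=a_n-a_{n-1}$ and using downward induction from the agreement at $n=0,1$. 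Nothing is missing.
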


\begin{proposition}\label{prop:four_fundamental_identities}
The following five identities hold:
\begin{align}
    L_n &= F_{n+1} + F_{n-1} &\text{for all $n \in \mathbb{Z}$}\label{eq:fund_identity_1_of_4}\\
    F_{2n} &= F_n L_n &\text{for all $n \in \mathbb{Z}$}\label{eq:fund_identity_1.5_of_4}\\
    G_{m+n} &= F_{m-1}G_n+F_mG_{n+1} &\text{for all $m,n \geq 1$}\label{eq:fund_identity_2_of_4}\\
    G_i &= G_0 F_{i-1} + G_1 F_i &\text{for all $i \geq 1$}\label{eq:fund_identity_3_of_4}\\
    \sum_{i=1}^{n}G_i &= G_{n+2} - G_2 &\text{for all $n \geq 1$}\label{eq:fund_identity_4_of_4}
\end{align}
\end{proposition}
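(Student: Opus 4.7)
The plan is to handle the two purely Fibonacci/Lucas identities \eqref{eq:fund_identity_1_of_4} and \eqref{eq:fund_identity_1.5_of_4} by direct algebra with Binet's formulas (Propositions~\ref{prop:Fibonacci_alpha_beta} and~\ref{prop:Lucas_alpha_beta}), and to derive the three Gibonacci statements \eqref{eq:fund_identity_2_of_4}, \eqref{eq:fund_identity_3_of_4}, and \eqref{eq:fund_identity_4_of_4} by induction and telescoping. For \eqref{eq:fund_identity_1.5_of_4} one simply multiplies:
$$F_n L_n = \frac{\alpha^n-\beta^n}{\alpha-\beta}\,(\alpha^n+\beta^n) = \frac{\alpha^{2n}-\beta^{2n}}{\alpha-\beta} = F_{2n}.$$
For \eqref{eq:fund_identity_1_of_4} I would factor
$$F_{n+1}+F_{n-1} = \frac{\alpha^{n-1}(\alpha^2+1)-\beta^{n-1}(\beta^2+1)}{\alpha-\beta}$$
and then use the two algebraic facts $\alpha^2+1 = \sqrt{5}\,\alpha$ and $\beta^2+1 = -\sqrt{5}\,\beta$ (each readily checked from $\alpha^2=\alpha+1$, $\beta^2=\beta+1$ together with the explicit values of $\alpha,\beta$) to collapse the numerator to $\sqrt{5}\,(\alpha^n+\beta^n)$, which equals $\sqrt{5}\,L_n$.

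For \eqref{eq:fund_identity_2_of_4} I would induct on $m\geq 1$ with $n$ fixed and arbitrary. The base cases $m=1$ and $m=2$ reduce to $G_{n+1}=G_{n+1}$ and $G_{n+2}=G_n+G_{n+1}$ using $F_0=0$ and $F_1=F_2=1$. For the inductive step, assuming the identity for $m-1$ and $m$, I add the two instances and collect coefficients via the Fibonacci recurrence:
$$G_{(m+1)+n} = G_{m+n}+G_{(m-1)+n} = (F_{m-1}+F_{m-2})G_n + (F_m+F_{m-1})G_{n+1} = F_m G_n + F_{m+1}G_{n+1}.$$
Identity \eqref{eq:fund_identity_3_of_4} then follows from \eqref{eq:fund_identity_2_of_4} by substituting $m\mapsto i-1$ and $n\mapsto 1$ for $i\geq 2$ (and handling $i=1$ by direct substitution $G_1 = G_0 F_0 + G_1 F_1$); using $G_2=G_0+G_1$ one rewrites $F_{i-2}G_1 + F_{i-1}(G_0+G_1) = F_{i-1}G_0 + F_i G_1$. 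Finally, \eqref{eq:fund_identity_4_of_4} is a telescoping sum: rewriting each term as $G_i = G_{i+2}-G_{i+1}$ gives
$$\sum_{i=1}^n G_i = \sum_{i=1}^n (G_{i+2}-G_{i+1}) = G_{n+2}-G_2.$$

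None of these five sub-proofs is genuinely difficult; the only real bit of ingenuity is recognizing the algebraic identities $\alpha^2+1=\sqrt{5}\,\alpha$ and $\beta^2+1=-\sqrt{5}\,\beta$ used in the proof of \eqref{eq:fund_identity_1_of_4}. The main ``obstacle'' is therefore essentially organizational: bundling five related but distinct claims into a single proposition while keeping the individual arguments short and uniform, and being careful that the index ranges in \eqref{eq:fund_identity_2_of_4} ($m,n\geq 1$) and \eqref{eq:fund_identity_3_of_4} ($i\geq 1$) are respected when reducing one identity to another.
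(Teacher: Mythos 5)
Your proof is correct, but it takes a genuinely different route from the paper: the paper does not prove these identities at all, it simply cites Vajda (Identities (6), (13), (8), (33)) for \eqref{eq:fund_identity_1_of_4}, \eqref{eq:fund_identity_1.5_of_4}, \eqref{eq:fund_identity_2_of_4}, and \eqref{eq:fund_identity_4_of_4}, and then derives \eqref{eq:fund_identity_3_of_4} from \eqref{eq:fund_identity_2_of_4} by setting $m:=i$ and $n:=0$. Your self-contained argument (Binet for the two Fibonacci--Lucas identities, induction on $m$ for \eqref{eq:fund_identity_2_of_4}, telescoping for \eqref{eq:fund_identity_4_of_4}) is complete and each step checks out; in particular the algebraic facts $\alpha^2+1=\sqrt{5}\,\alpha$ and $\beta^2+1=-\sqrt{5}\,\beta$ are correct and make \eqref{eq:fund_identity_1_of_4} valid for all $n\in\mathbb{Z}$ since Binet's formulas hold for negative indices. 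One small point in your favor: the paper's derivation of \eqref{eq:fund_identity_3_of_4} plugs $n=0$ into an identity stated only for $m,n\geq 1$, whereas your substitution $m\mapsto i-1$, $n\mapsto 1$ (with the separate check at $i=1$) stays within the stated range, so your reduction is actually the more careful one. What the paper's approach buys is brevity and deference to standard references; what yours buys is a fully verifiable, self-contained proposition at the cost of about a page of routine computation.
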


\begin{proof}
Identities~\eqref{eq:fund_identity_1_of_4}, \eqref{eq:fund_identity_1.5_of_4}, \eqref{eq:fund_identity_2_of_4}, and \eqref{eq:fund_identity_4_of_4}, respectively, are proven by Vajda in his Identities~(6), (13), (8), and (33), respectively~\cite[pp.~24,25,38]{Vajda1989}. Identity~\eqref{eq:fund_identity_3_of_4} follows from Identity~\eqref{eq:fund_identity_2_of_4} if we set $m:=i$ and $n:=0$.
\end{proof}

\begin{proposition}\label{prop:generalized_Cassini} (Generalized Cassini's Identity) 
For all $n\geq 0$, the following equality holds: $G_{n+1} G_{n-1} - G_n^2 = (-1)^n \cdot D_{G_0,G_1}$, where $\Dinitial = G_1^2 - G_0 G_1 - G_0^2$. 
\end{proposition}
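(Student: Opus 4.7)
The plan is to prove the identity by straightforward induction on $n$, using only the recurrence $G_m = G_{m-1} + G_{m-2}$ (which we extend backward once via $G_{-1} := G_1 - G_0$ so that the $n=0$ case makes sense).

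For the base cases I would verify $n=0$ and $n=1$ directly. With $G_{-1} = G_1 - G_0$, the $n=0$ case is
\[
G_1 G_{-1} - G_0^2 = G_1(G_1 - G_0) - G_0^2 = G_1^2 - G_0 G_1 - G_0^2 = D_{G_0,G_1},
\]
matching $(-1)^0 \cdot D_{G_0,G_1}$. For $n=1$, using $G_2 = G_0 + G_1$,
\[
G_2 G_0 - G_1^2 = (G_0 + G_1)G_0 - G_1^2 = -(G_1^2 - G_0 G_1 - G_0^2) = -D_{G_0,G_1},
\]
matching $(-1)^1 \cdot D_{G_0,G_1}$.

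For the inductive step, assume $G_{n+1}G_{n-1} - G_n^2 = (-1)^n D_{G_0,G_1}$ and show $G_{n+2} G_n - G_{n+1}^2 = (-1)^{n+1} D_{G_0,G_1}$. Substituting $G_{n+2} = G_{n+1} + G_n$ and rewriting $G_{n-1} = G_{n+1} - G_n$, I compute
\[
G_{n+2}G_n - G_{n+1}^2 = (G_{n+1}+G_n)G_n - G_{n+1}^2 = G_n^2 - G_{n+1}(G_{n+1}-G_n) = -\bigl(G_{n+1}G_{n-1} - G_n^2\bigr),
\]
so by the inductive hypothesis this equals $-(-1)^n D_{G_0,G_1} = (-1)^{n+1} D_{G_0,G_1}$, completing the induction.

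There is no real obstacle; the only subtlety is the boundary issue at $n=0$, which requires a consistent definition of $G_{-1}$ via the backward recurrence. An alternative route would be to substitute Identity~\eqref{eq:fund_identity_3_of_4}, $G_i = G_0 F_{i-1} + G_1 F_i$, into $G_{n+1}G_{n-1} - G_n^2$ and reduce everything to ordinary Cassini's identity for Fibonacci numbers together with a d'Ocagne-type identity; this works but requires more algebra, so I would prefer the induction above for its brevity.
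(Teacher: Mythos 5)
Your induction is correct, but it takes a genuinely different route from the paper: the paper does not prove Proposition~\ref{prop:generalized_Cassini} at all, instead disposing of it in one line by citing Rabinowitz (Theorem~8), where a generalization is stated. Your argument is the standard self-contained one --- verify $n=0,1$ and observe that $G_{n+2}G_n - G_{n+1}^2 = -(G_{n+1}G_{n-1}-G_n^2)$ via the forward and backward recurrences --- and every step checks out. Your decision to pin down $G_{-1} := G_1 - G_0$ is the right call and is not pedantry: the statement ``for all $n\geq 0$'' invokes $G_{-1}$ at $n=0$ even though the paper only defines the sequence for $n\geq 0$, and the paper itself silently relies on exactly this backward extension later, in the proof of Lemma~\ref{lem:Dinitial_congruence}, where it substitutes $G_{-1} = G_1 - G_0$. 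What the citation buys the authors is brevity; what your proof buys is self-containment and an explicit resolution of the $n=0$ boundary convention, at the cost of a dozen lines. Your suggested alternative via Identity~\eqref{eq:fund_identity_3_of_4} and ordinary Cassini would also work but, as you note, is longer, so the induction is the better choice.
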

\begin{proof}
A generalization of this well-known identity is stated in Rabinowitz~\cite[Theorem~8]{Rabinowitz1991}.
\end{proof}

Though many encyclopedic resources such as Vajda~\cite{Vajda1989} and Koshy~\cite{Koshy2001} give nice closed forms for $F_{j-1} + F_{j+1}$ and $L_{j-1} + L_{j+1}$, the literature surprisingly lacks a closed form for $G_{j-1} + G_{j+1}$. We fill this gap in the literature with Lemma~\ref{lem:Gib_sum_of_gapsize_2} below, and this lemma along with the three identities in Lemma~\ref{lem:Koshy_identities} helps us prove the four Gibonacci propositions to follow in Subsection~\ref{subsec:four_Gibonacci_propositions}.

\begin{lemma}\label{lem:Gib_sum_of_gapsize_2}
For all $j \geq 1$, the following identity holds:
$$ G_{j-1} + G_{j+1} = G_0 L_{j-1} + G_1 L_{j}.$$
\end{lemma}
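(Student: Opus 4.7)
The plan is to reduce the identity to the already-established Fibonacci--Lucas relation $L_n = F_{n+1} + F_{n-1}$ by writing every Gibonacci term as an explicit $\mathbb{Z}$-linear combination of $G_0$ and $G_1$. Specifically, identity \eqref{eq:fund_identity_3_of_4} tells us that for any $i \geq 1$,
\[
G_i \;=\; G_0 F_{i-1} + G_1 F_i,
\]
so $G_{j-1}$ and $G_{j+1}$ can each be expanded in this form (assuming the indices on the Fibonacci side are nonnegative).

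For the main range $j \geq 2$, I would simply apply \eqref{eq:fund_identity_3_of_4} to both $G_{j-1}$ and $G_{j+1}$, add, and collect coefficients of $G_0$ and $G_1$ separately:
\[
G_{j-1} + G_{j+1} \;=\; G_0(F_{j-2} + F_j) + G_1(F_{j-1} + F_{j+1}).
\]
Now identity \eqref{eq:fund_identity_1_of_4} gives $F_{j-2} + F_j = L_{j-1}$ and $F_{j-1} + F_{j+1} = L_j$, which immediately yields $G_{j-1} + G_{j+1} = G_0 L_{j-1} + G_1 L_j$ as claimed.

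The only subtle point is the boundary case $j = 1$, since applying \eqref{eq:fund_identity_3_of_4} with $i = j-1 = 0$ falls outside its stated range $i \geq 1$. I would handle $j=1$ by a direct one-line check: the recurrence gives $G_2 = G_0 + G_1$, so $G_0 + G_2 = 2G_0 + G_1$, which matches $G_0 L_0 + G_1 L_1 = 2G_0 + G_1$. Thus there is no real obstacle here; the whole proof is essentially a coefficient-matching argument once one remembers that every Gibonacci term is linear in $(G_0,G_1)$ with Fibonacci coefficients. The identity is best viewed as saying that the Fibonacci identity \eqref{eq:fund_identity_1_of_4} ``lifts'' to every Gibonacci sequence by linearity.
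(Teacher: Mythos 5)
Your proof is correct and takes essentially the same route as the paper's: both expand $G_{j-1}$ and $G_{j+1}$ via Identity~\eqref{eq:fund_identity_3_of_4} and then apply Identity~\eqref{eq:fund_identity_1_of_4} to the resulting Fibonacci coefficients of $G_0$ and $G_1$. Your explicit treatment of the boundary case $j=1$ (where Identity~\eqref{eq:fund_identity_3_of_4} is stated only for $i \geq 1$) is a minor point of care that the paper's proof passes over silently.
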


\begin{proof}
Let $j \geq 1$ be given. Observe the sequence of equalities
\begin{align*}
   G_{j-1} + G_{j+1} &= (G_0 F_{j-2} + G_1 F_{j-1}) + (G_0 F_{j} + G_1 F_{j+1}) &\mbox{(by Proposition~\ref{prop:four_fundamental_identities}, Identity~\eqref{eq:fund_identity_3_of_4})}\\
   &= G_0(F_{j-2} + F_j) + G_1(F_{j-1} + F_{j+1})\\
   &= G_0 L_{j-1} + G_1 L_j. &\mbox{(by Proposition~\ref{prop:four_fundamental_identities}, Identity~\eqref{eq:fund_identity_1_of_4})}
\end{align*}
Hence the identity holds for all $j \geq 1$. 
\end{proof}

To prove the propositions in Subsection~\ref{subsec:four_Gibonacci_propositions}, we also utilize three identities given in Lemma~\ref{lem:Koshy_identities}. Identities~\eqref{eq:Koshy_1} and \eqref{eq:Koshy_2} of this lemma can be found in Koshy~\cite[Identities 70 and 71, p.\ 90]{Koshy2001}, but he provides no proofs. It turns out that these two identities were originally stated in 1971 (though unfortunately again without proofs) in Dudley and Tucker~\cite{Dudley1971}. The related Identity~\eqref{eq:Koshy_3}, in the form we provide and utilize in Subsection~\ref{subsec:four_Gibonacci_propositions}, does not appear to be in the literature. Due to the lack of proofs for any of these identities in the literature, for completion we prove these three identities in Lemma~\ref{lem:Koshy_identities} by proving a single identity in which these three identities hold as a consequence (see Remark~\ref{rem:infinite_family}).

\begin{lemma}\label{lem:Koshy_identities}
For all $j \geq 0$, the following three identities hold:
\begin{align}
    F_{4j+1} - 1 &= F_{2j} L_{2j+1}\label{eq:Koshy_1}\\
    F_{4j+3} - 1 &= F_{2j+2} L_{2j+1}\label{eq:Koshy_2}\\
    F_{4j+4} - 1 &= F_{2j+3} L_{2j+1}.\label{eq:Koshy_3}
\end{align}
\end{lemma}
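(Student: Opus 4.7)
The plan is to prove all three identities as specializations of a single master identity
$$F_m L_n = F_{m+n} + (-1)^n F_{m-n},$$
which holds for all integers $m,n$. Once this master identity is in hand, each of \eqref{eq:Koshy_1}, \eqref{eq:Koshy_2}, and \eqref{eq:Koshy_3} drops out by plugging in an appropriate pair $(m,n)$ with $n = 2j+1$ (so that $(-1)^n = -1$) and with the difference $m-n \in \{-1, 1, 2\}$ (so that $F_{m-n} = 1$).

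First, I would derive the master identity directly from Binet's formulas, using Propositions~\ref{prop:Fibonacci_alpha_beta} and \ref{prop:Lucas_alpha_beta}. Multiplying out,
$$F_m L_n = \frac{(\alpha^m - \beta^m)(\alpha^n + \beta^n)}{\alpha - \beta} = \frac{\alpha^{m+n} - \beta^{m+n}}{\alpha-\beta} + \frac{\alpha^m\beta^n - \alpha^n\beta^m}{\alpha-\beta}.$$
The first fraction is $F_{m+n}$. Factoring $(\alpha\beta)^n = (-1)^n$ out of the numerator of the second fraction gives $(-1)^n \cdot \frac{\alpha^{m-n} - \beta^{m-n}}{\alpha-\beta} = (-1)^n F_{m-n}$. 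This yields the master identity, and the derivation is uniform in the sign of $m-n$ since Binet's formula is valid for negative indices as well (which is why the excerpt states Propositions~\ref{prop:Fibonacci_alpha_beta} and \ref{prop:Lucas_alpha_beta} for all $n \in \mathbb{Z}$).

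Second, I would specialize. Taking $(m,n) = (2j,\, 2j+1)$ gives
$$F_{2j}L_{2j+1} = F_{4j+1} + (-1)^{2j+1}F_{-1} = F_{4j+1} - 1,$$
since $F_{-1}=1$; taking $(m,n)=(2j+2,\, 2j+1)$ gives $F_{2j+2}L_{2j+1} = F_{4j+3} - F_1 = F_{4j+3} - 1$; and taking $(m,n)=(2j+3,\, 2j+1)$ gives $F_{2j+3}L_{2j+1} = F_{4j+4} - F_2 = F_{4j+4} - 1$. Rearranging each equation yields \eqref{eq:Koshy_1}, \eqref{eq:Koshy_2}, and \eqref{eq:Koshy_3}, respectively.

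There is no real obstacle here: the only subtlety is justifying the use of Binet's formula at the negative index $-1$ in the first specialization. This is handled by interpreting Propositions~\ref{prop:Fibonacci_alpha_beta} and \ref{prop:Lucas_alpha_beta} over $\mathbb{Z}$, which the excerpt already permits, so that $F_{-1} = \frac{\alpha^{-1} - \beta^{-1}}{\alpha - \beta} = \frac{\beta - \alpha}{\alpha\beta(\alpha-\beta)} = -\frac{1}{\alpha\beta} = 1$. The remark promised in the lemma's preamble (Remark~\ref{rem:infinite_family}) presumably notes that the same method produces an infinite family of identities of the form $F_N - F_{m-n} = F_m L_n$ with $n$ odd; the three cases singled out here are precisely those where $F_{m-n}=1$.
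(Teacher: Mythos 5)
Your proposal is correct and is essentially the paper's own proof: the authors likewise derive a single master identity from Binet's formulas (in the form $F_{2j+r}L_{2j+1} = F_{4j+r+1} - F_{r-1}$, which is your identity with $m = 2j+r$, $n = 2j+1$ odd) and then specialize to the three cases where $F_{r-1} = 1$, i.e., $r \in \{0,2,3\}$. Your slightly more general formulation $F_m L_n = F_{m+n} + (-1)^n F_{m-n}$ and your explicit check that $F_{-1} = 1$ match the content of the paper's Remark~\ref{rem:infinite_family}.
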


\begin{proof}
Utilizing the closed forms for $F_n$ and $L_n$ in Propositions~\ref{prop:Fibonacci_alpha_beta} and \ref{prop:Lucas_alpha_beta}, for $r,j \in \mathbb{Z}$ we have the sequence of equalities
\begin{align*}
    F_{2j+r} L_{2j+1} &= \frac{\alpha^{2j+r} - \beta^{2j+r}}{\alpha - \beta} \cdot \left( \alpha^{2j+1} + \beta^{2j+1} \right)\\
    &= \frac{\alpha^{4j+r+1} - \beta^{4j+r+1} + \alpha^{2j+r} \beta^{2j+1} - \alpha^{2j+1} \beta^{2j+r}}{\alpha-\beta}\\
    &= \frac{\alpha^{4j+r+1} - \beta^{4j+r+1}}{\alpha - \beta} + \frac{(\alpha \beta)^{2j+1} \left( \alpha^{r-1} - \beta^{r-1} \right)}{\alpha - \beta}\\
    &= F_{4j+r+1} + (-1)^{2j+1} \cdot \frac{ \alpha^{r-1} - \beta^{r-1}}{\alpha - \beta} &\text{(since $\alpha\beta = -1$)}\\
    &= F_{4j+r+1} - F_{r-1}.
\end{align*}
If we set $r:=0$, then we have $F_{2j+0} L_{2j+1} = F_{4j+0+1} - F_{0-1}$ so Identity~\eqref{eq:Koshy_1} holds since $F_{-1} = 1$. And if we set $r:=2$, then we have $F_{2j+2} L_{2j+1} = F_{4j+2+1} - F_{2-1}$ so Identity~\eqref{eq:Koshy_2} holds since $F_{1} = 1$. Lastly if we set $r:=3$, then we have $F_{2j+3} L_{2j+1} = F_{4j+3+1} - F_{3-1}$ so Identity~\eqref{eq:Koshy_3} holds since $F_{2} = 1$.
\end{proof}

\begin{remark}\label{rem:infinite_family}
In proving Lemma~\ref{lem:Koshy_identities}, we actually proved the much stronger result that an infinite family of identities of the following form holds:
$$ F_{4j+r+1} - F_{r-1} = F_{2j+r} L_{2j+1},$$
for all $r,j \in \mathbb{Z}$. This follows since the closed formulas for $F_n$ and $L_n$, given in Propositions~\ref{prop:Fibonacci_alpha_beta} and \ref{prop:Lucas_alpha_beta}, work for all integer values of $n$.
\end{remark}


\subsection{Four Gibonacci propositions}\label{subsec:four_Gibonacci_propositions}

The following four Gibonacci identities (along with our characterizations for the values $\GibSum$ given in Subsections~\ref{subsec:gcd_characterization_of_GibSum_formula} and \ref{subsec:Pisano_characterization_of_GibSum_formula} to follow) are used in the proofs of our main results in Sections~\ref{sec:main_results_even_case} and \ref{sec:main_results_odd_case}:
\begin{align*}
    G_{4j+1} - G_1 &= F_{2j}(G_{2j} + G_{2j+2}) &&\mbox{Proposition~\ref{prop:Gib_prop_1_of_4}}\\
    G_{4j+2} - G_2 &= F_{2j}(G_{2j+1} + G_{2j+3}) &&\mbox{Proposition~\ref{prop:Gib_prop_2_of_4}}\\
    G_{4j+3} - G_1 &= L_{2j+1} G_{2j+2} &&\mbox{Proposition~\ref{prop:Gib_prop_3_of_4}}\\
    G_{4j+4} - G_2 &= L_{2j+1} G_{2j+3} &&\mbox{Proposition~\ref{prop:Gib_prop_4_of_4}}
\end{align*}
These identities are stated in Koshy but without proof~\cite[p.~214]{Koshy2001}, so for completeness we provide proofs for each proposition.

\begin{proposition}\label{prop:Gib_prop_1_of_4}
For all $j \geq 0$, the following identity holds:
$$ G_{4j+1} - G_1 = F_{2j}(G_{2j} + G_{2j+2}).$$
\end{proposition}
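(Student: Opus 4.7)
The plan is to express $G_{4j+1}$ using the Binet-style decomposition for Gibonacci numbers, then peel off the subtraction $-G_1$ so that a factor of the form $F_{4j+1}-1$ appears, which matches Identity~\eqref{eq:Koshy_1} in Lemma~\ref{lem:Koshy_identities}. Concretely, I would start from Identity~\eqref{eq:fund_identity_3_of_4} to write
\[
G_{4j+1} - G_1 \;=\; G_0 F_{4j} + G_1 F_{4j+1} - G_1 \;=\; G_0 F_{4j} + G_1\bigl(F_{4j+1} - 1\bigr).
\]

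Next I would rewrite each of the two Fibonacci terms so that a common factor of $F_{2j}$ appears. For the second term, Identity~\eqref{eq:Koshy_1} gives $F_{4j+1}-1 = F_{2j} L_{2j+1}$ directly. For the first term, Identity~\eqref{eq:fund_identity_1.5_of_4} gives $F_{4j} = F_{2j}L_{2j}$. Substituting and factoring yields
\[
G_{4j+1} - G_1 \;=\; F_{2j}\bigl(G_0 L_{2j} + G_1 L_{2j+1}\bigr).
\]

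Finally, I would recognize the parenthesized expression as an instance of Lemma~\ref{lem:Gib_sum_of_gapsize_2}: taking $j$ in that lemma to be $2j+1$ yields $G_0 L_{2j} + G_1 L_{2j+1} = G_{2j} + G_{2j+2}$, which finishes the proof. The only slightly delicate point is checking that the range restrictions of the cited identities are satisfied for all $j \geq 0$ (in particular, Identity~\eqref{eq:fund_identity_3_of_4} needs $i \geq 1$, and Lemma~\ref{lem:Gib_sum_of_gapsize_2} needs $j \geq 1$, both of which are met since $4j+1 \geq 1$ and $2j+1 \geq 1$); beyond that, the argument is just a short identity chase with no real obstacle.
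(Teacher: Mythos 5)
Your argument is correct and is essentially the paper's own proof run in the opposite direction: the paper starts from $F_{2j}(G_{2j}+G_{2j+2})$, applies Lemma~\ref{lem:Gib_sum_of_gapsize_2}, Identity~\eqref{eq:fund_identity_1.5_of_4}, Identity~\eqref{eq:Koshy_1}, and Identity~\eqref{eq:fund_identity_3_of_4} to arrive at $G_{4j+1}-G_1$, which is exactly your chain reversed. The index checks you note are also fine, so there is nothing to add.
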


\begin{proof}
Let $j \geq 0$ be given. Observe the sequence of equalities
\begin{align*}
    F_{2j}(G_{2j} + G_{2j+2}) &= F_{2j} (G_0 L_{2j} + G_1 L_{2j+1}) &\mbox{(by Lemma~\ref{lem:Gib_sum_of_gapsize_2})}\\
    &= G_0 \cdot F_{2j} L_{2j} + G_1 \cdot F_{2j} L_{2j+1}\\
    &= G_0 F_{4j} + G_1 \cdot F_{2j} L_{2j+1} &\mbox{(by Proposition~\ref{prop:four_fundamental_identities}, Identity~\eqref{eq:fund_identity_1.5_of_4})}\\
    &= G_0 F_{4j} + G_1 (F_{4j+1} - 1) &\mbox{(by  Lemma~\ref{lem:Koshy_identities}, Identity~\eqref{eq:Koshy_1})}\\
    &= (G_0 F_{4j} + G_1 F_{4j+1}) - G_1\\
    &= G_{4j+1} - G_1, &\mbox{(by Proposition~\ref{prop:four_fundamental_identities}, Identity~\eqref{eq:fund_identity_3_of_4})}
\end{align*}
as desired. Hence $G_{4j+1} - G_1 = F_{2j}(G_{2j} + G_{2j+2})$ for all $j \geq 0$.
\end{proof}

\begin{proposition}\label{prop:Gib_prop_2_of_4}
For all $j \geq 0$, the following identity holds:
$$ G_{4j+2} - G_2 = F_{2j}(G_{2j+1} + G_{2j+3}).$$
\end{proposition}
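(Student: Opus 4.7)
The plan is to mirror the proof of Proposition~\ref{prop:Gib_prop_1_of_4} as closely as possible, starting from the right-hand side $F_{2j}(G_{2j+1} + G_{2j+3})$ and showing it equals $G_{4j+2} - G_2$. First, I would apply Lemma~\ref{lem:Gib_sum_of_gapsize_2}, where the index shift takes $j$ to $2j+2$, to rewrite the inner sum as $G_{2j+1} + G_{2j+3} = G_0 L_{2j+1} + G_1 L_{2j+2}$. Distributing $F_{2j}$ across this expression yields $G_0 \cdot F_{2j} L_{2j+1} + G_1 \cdot F_{2j} L_{2j+2}$.

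For the coefficient of $G_0$, Identity~\eqref{eq:Koshy_1} of Lemma~\ref{lem:Koshy_identities} immediately gives $F_{2j} L_{2j+1} = F_{4j+1} - 1$, exactly as in the preceding proposition. The main obstacle is the coefficient of $G_1$, namely $F_{2j} L_{2j+2}$, which does not directly match any of the three identities in Lemma~\ref{lem:Koshy_identities}. I would handle this by splitting $L_{2j+2} = L_{2j+1} + L_{2j}$ using the Lucas recurrence, so that
\[
F_{2j} L_{2j+2} \;=\; F_{2j} L_{2j+1} + F_{2j} L_{2j} \;=\; (F_{4j+1} - 1) + F_{4j} \;=\; F_{4j+2} - 1,
\]
where the second equality uses~\eqref{eq:Koshy_1} together with~\eqref{eq:fund_identity_1.5_of_4}, and the third equality is the Fibonacci recurrence. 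As an alternative, one could invoke the generalized identity $F_{4j+r+1} - F_{r-1} = F_{2j+r} L_{2j+1}$ from Remark~\ref{rem:infinite_family}, but the recurrence-based split is more self-contained and stays within the identities already proved.

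Combining the two pieces produces
\[
F_{2j}(G_{2j+1}+G_{2j+3}) \;=\; G_0(F_{4j+1} - 1) + G_1(F_{4j+2} - 1) \;=\; \bigl(G_0 F_{4j+1} + G_1 F_{4j+2}\bigr) - (G_0 + G_1).
\]
Applying Identity~\eqref{eq:fund_identity_3_of_4} with $i := 4j+2$ identifies the parenthesized expression as $G_{4j+2}$, and since $G_2 = G_0 + G_1$ by the Gibonacci recurrence, the subtracted quantity is exactly $G_2$. This yields $G_{4j+2} - G_2$, closing the chain. The structural difference from Proposition~\ref{prop:Gib_prop_1_of_4} is essentially confined to the extra reduction step for $F_{2j}L_{2j+2}$; once that step is in place, the remainder of the argument is a direct parallel.
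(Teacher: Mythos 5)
Your proof is correct and follows essentially the same route as the paper's: apply Lemma~\ref{lem:Gib_sum_of_gapsize_2}, split $L_{2j+2}=L_{2j}+L_{2j+1}$, and reduce via Identity~\eqref{eq:Koshy_1} and $F_{2n}=F_nL_n$. The only (cosmetic) difference is at the end: you keep the $G_0$ and $G_1$ coefficients separate and finish with the standard Identity~\eqref{eq:fund_identity_3_of_4}, whereas the paper regroups into $G_1F_{4j}+G_2F_{4j+1}$ and invokes the shifted form $G_i=G_1F_{i-2}+G_2F_{i-1}$; your variant is arguably slightly cleaner since it avoids deriving that shifted identity.
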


\begin{proof}
Let $j \geq 0$ be given. Observe the sequence of equalities
\begin{align*}
    F_{2j}(G_{2j+1} + G_{2j+3})
    &= F_{2j} (G_0 L_{2j+1} + G_1 L_{2j+2}) &\mbox{(by Lemma~\ref{lem:Gib_sum_of_gapsize_2})}\\
    &= G_0 \cdot F_{2j} L_{2j+1} + G_1 \cdot F_{2j} L_{2j+2}\\
    &= G_0 \cdot F_{2j} L_{2j+1} + G_1 \cdot F_{2j} (L_{2j} + L_{2j+1})\\
    &= (G_0 \cdot F_{2j} L_{2j+1} + G_1 \cdot F_{2j} L_{2j+1} ) + G_1 \cdot F_{2j} L_{2j}\\
    &= (G_0 + G_1) \cdot F_{2j} L_{2j+1} + G_1 \cdot F_{2j} L_{2j}\\
    &= G_2 \cdot F_{2j} L_{2j+1} + G_1 \cdot F_{2j} L_{2j}\\
    &= G_2 \cdot(F_{4j+1} - 1) + G_1 \cdot F_{2j} L_{2j} &\hspace{-.85in}\mbox{(by Lemma~\ref{lem:Koshy_identities}, Identity~\eqref{eq:Koshy_1})}\\
    &= G_2 \cdot(F_{4j+1} - 1) + G_1 F_{4j} &\hspace{-.85in}\mbox{(by Proposition~\ref{prop:four_fundamental_identities}, Identity~\eqref{eq:fund_identity_1.5_of_4})}\\
    &= (G_1 F_{4j} + G_2 F_{4j+1}) - G_2\\
    &= G_{4j+2} - G_2,
\end{align*}
where the last equality holds since for all $i \geq 1$, the value $G_i$ can be written in the following form $G_i = G_1 F_{i-2} + G_2 F_{i-1}$ by Identity~\eqref{eq:fund_identity_2_of_4} of Proposition~\ref{prop:four_fundamental_identities},  if we set $m:=i-1$ and $n:=1$. Hence $F_{4j+2} - G_2 = F_{2j}(G_{2j+1} + G_{2j+3})$ for all $j \geq 0$.
\end{proof}

\begin{proposition}\label{prop:Gib_prop_3_of_4}
For all $j \geq 0$, the following identity holds:
$$ G_{4j+3} - G_1 = L_{2j+1} G_{2j+2}.$$
\end{proposition}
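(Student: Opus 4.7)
The plan is to mirror the structure of the proofs of Propositions~\ref{prop:Gib_prop_1_of_4} and \ref{prop:Gib_prop_2_of_4}, working with the right-hand side $L_{2j+1} G_{2j+2}$ and reducing it to $G_{4j+3} - G_1$. The key algebraic tools are the same three ingredients that powered the previous two propositions: Identity~\eqref{eq:fund_identity_3_of_4} (to expand Gibonacci numbers in a Fibonacci basis with coefficients $G_0, G_1$), Identity~\eqref{eq:fund_identity_1.5_of_4} (to collapse $F_n L_n$ into $F_{2n}$), and Lemma~\ref{lem:Koshy_identities} (to collapse a mixed product $F_{2j+r} L_{2j+1}$ into $F_{4j+r+1}-F_{r-1}$). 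Of these, the right Koshy identity to use here is Identity~\eqref{eq:Koshy_2}, since we will encounter $F_{2j+2} L_{2j+1}$ after the first expansion.

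First, I would rewrite $G_{2j+2}$ using Identity~\eqref{eq:fund_identity_3_of_4} as $G_{2j+2} = G_0 F_{2j+1} + G_1 F_{2j+2}$, so that
\[
L_{2j+1} G_{2j+2} = G_0 \cdot F_{2j+1} L_{2j+1} + G_1 \cdot F_{2j+2} L_{2j+1}.
\]
Next I would simplify each Fibonacci--Lucas product separately: Identity~\eqref{eq:fund_identity_1.5_of_4} gives $F_{2j+1} L_{2j+1} = F_{4j+2}$, and Identity~\eqref{eq:Koshy_2} of Lemma~\ref{lem:Koshy_identities} gives $F_{2j+2} L_{2j+1} = F_{4j+3}-1$. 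Substituting these back yields $G_0 F_{4j+2} + G_1 F_{4j+3} - G_1$, and a final application of Identity~\eqref{eq:fund_identity_3_of_4} (this time in the forward direction, with $i := 4j+3$) collapses $G_0 F_{4j+2} + G_1 F_{4j+3}$ to $G_{4j+3}$, producing the desired $G_{4j+3}-G_1$.

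I do not anticipate any real obstacle: the identity was essentially engineered to match the infinite family in Remark~\ref{rem:infinite_family} at $r = 2$, and the same ``expand, apply the appropriate Koshy identity, repackage via Identity~\eqref{eq:fund_identity_3_of_4}'' template used for Propositions~\ref{prop:Gib_prop_1_of_4} and \ref{prop:Gib_prop_2_of_4} will carry through cleanly. The only point requiring mild care is bookkeeping: making sure to pair $F_{2j+1}L_{2j+1}$ with \eqref{eq:fund_identity_1.5_of_4} rather than with a Koshy identity, and to pair $F_{2j+2}L_{2j+1}$ with \eqref{eq:Koshy_2} (as opposed to \eqref{eq:Koshy_1} or \eqref{eq:Koshy_3}), so that the indices $4j+2$ and $4j+3$ line up exactly with the expansion of $G_{4j+3}$.
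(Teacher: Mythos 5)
Your proposal is correct and uses exactly the same chain of identities as the paper's proof --- Identity~\eqref{eq:fund_identity_3_of_4} to expand $G_{2j+2}$ (resp.\ repackage $G_{4j+3}$), Identity~\eqref{eq:fund_identity_1.5_of_4} for $F_{2j+1}L_{2j+1}=F_{4j+2}$, and Identity~\eqref{eq:Koshy_2} for $F_{2j+2}L_{2j+1}=F_{4j+3}-1$ --- merely reading the equality chain from right to left instead of left to right. No gap; this is essentially the paper's argument.
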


\begin{proof}
Let $j \geq 0$ be given. Observe the sequence of equalities
\begin{align*}
    G_{4j+3} - G_1 &= (G_0 F_{4j+2} + G_1 F_{4j+3}) - G_1 &\mbox{(by Proposition~\ref{prop:four_fundamental_identities}, Identity~\eqref{eq:fund_identity_3_of_4})}\\
    &= G_0 F_{4j+2} + G_1 (F_{4j+3} - 1)\\
    &= G_0 F_{4j+2} + G_1 (F_{2j+2} L_{2j+1}) &\mbox{(by Lemma~\ref{lem:Koshy_identities}, Identity~\eqref{eq:Koshy_2})}\\
    &= G_0 (F_{2j+1} L_{2j+1}) + G_1 (F_{2j+2} L_{2j+1}) &\mbox{(by Proposition~\ref{prop:four_fundamental_identities}, Identity~\eqref{eq:fund_identity_1.5_of_4})}\\
    &= L_{2j+1} (G_0 G_{2j+1} + G_1 F_{2j+2})\\
    &= L_{2j+1} G_{2j+2}, &\mbox{(by Proposition~\ref{prop:four_fundamental_identities}, Identity~\eqref{eq:fund_identity_3_of_4})}
\end{align*}
as desired. Hence $G_{4j+3} - G_1 = L_{2j+1} G_{2j+2}$ for all $j \geq 0$.
\end{proof}

\begin{proposition}\label{prop:Gib_prop_4_of_4}
For all $j \geq 0$, the following identity holds:
$$ G_{4j+4} - G_2 = L_{2j+1} G_{2j+3}.$$
\end{proposition}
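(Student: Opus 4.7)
The plan is to mirror the proof of Proposition~\ref{prop:Gib_prop_3_of_4}, this time exploiting both Identities~\eqref{eq:Koshy_2} and \eqref{eq:Koshy_3} of Lemma~\ref{lem:Koshy_identities}, since $F_{4j+3}-1$ and $F_{4j+4}-1$ each carry a factor of $L_{2j+1}$. The starting point is to expand $G_{4j+4}$ via Identity~\eqref{eq:fund_identity_3_of_4}, writing
\[
G_{4j+4} = G_0 F_{4j+3} + G_1 F_{4j+4}.
\]

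Next, I would use the fact that $G_2 = G_0 + G_1$ (by the defining recurrence) to split $G_2$ across the two summands, giving
\[
G_{4j+4} - G_2 = G_0(F_{4j+3} - 1) + G_1(F_{4j+4} - 1).
\]
At this point the two Koshy identities do all the work: Identity~\eqref{eq:Koshy_2} replaces $F_{4j+3}-1$ with $F_{2j+2}L_{2j+1}$, and Identity~\eqref{eq:Koshy_3} replaces $F_{4j+4}-1$ with $F_{2j+3}L_{2j+1}$. Factoring out the common $L_{2j+1}$ leaves
\[
G_{4j+4} - G_2 = L_{2j+1}\bigl(G_0 F_{2j+2} + G_1 F_{2j+3}\bigr).
\]

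The final step is to recognize the parenthesized expression as $G_{2j+3}$ by a second application of Identity~\eqref{eq:fund_identity_3_of_4} (with $i := 2j+3$), which yields the desired equality. There is no real obstacle here; the only subtlety is noticing that the constant $G_2$ splits as $G_0 + G_1$ in exactly the way needed so that the two Koshy identities apply in parallel and the resulting Fibonacci indices $2j+2$ and $2j+3$ line up perfectly to reassemble into $G_{2j+3}$. This is the structural reason these four propositions come in a natural quadruple, paralleling the residues of the index modulo $4$.
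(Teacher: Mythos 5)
Your proposal is correct and follows exactly the same route as the paper's proof: expand $G_{4j+4}$ via Identity~\eqref{eq:fund_identity_3_of_4}, split $G_2 = G_0 + G_1$ to form $G_0(F_{4j+3}-1) + G_1(F_{4j+4}-1)$, apply Identities~\eqref{eq:Koshy_2} and \eqref{eq:Koshy_3}, factor out $L_{2j+1}$, and reassemble $G_0 F_{2j+2} + G_1 F_{2j+3}$ into $G_{2j+3}$. No differences worth noting.
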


\begin{proof}
Let $j \geq 0$ be given. Observe the sequence of equalities
\begin{align*}
    G_{4j+4} - G_2 &= (G_0 F_{4j+3} + G_1 F_{4j+4}) - G_2 &\mbox{(by Proposition~\ref{prop:four_fundamental_identities}, Identity~\eqref{eq:fund_identity_3_of_4})}\\
    &= (G_0 F_{4j+3} + G_1 F_{4j+4}) - (G_0 + G_1)\\
    &= G_0 (F_{4j+3} - 1) + G_1 (F_{4j+4} - 1)\\
    &= G_0 (F_{2j+2} L_{2j+1}) + G_1 (F_{4j+4} - 1) &\mbox{(by Lemma~\ref{lem:Koshy_identities}, Identity~\eqref{eq:Koshy_2})}\\
    &= G_0 (F_{2j+2} L_{2j+1}) + G_1 (F_{2j+3} L_{2j+1}) &\mbox{(by Lemma~\ref{lem:Koshy_identities}, Identity~\eqref{eq:Koshy_3})}\\
    &= L_{2j+1} (G_0 F_{2j+2} + G_1 F_{2j+3})\\
    &= L_{2j+1} G_{2j+3}, &\mbox{(by Proposition~\ref{prop:four_fundamental_identities}, Identity~\eqref{eq:fund_identity_3_of_4})}
\end{align*}
as desired. Hence $G_{4j+4} - G_2 = L_{2j+1} G_{2j+3}$ for all $j \geq 0$.
\end{proof}


\section{Two equivalent formulas used to compute \texorpdfstring{$\GibSum$}{our GCD formula}}\label{sec:two_equivalent_formulas_for_GibSum}

The first two major results of this paper are given in this section. We provide two seemingly different, yet equivalent, formulas that compute the value $\GibSum$, the GCD of the sums of $k$ consecutive Gibonacci numbers. These two different characterizations not only help prove our main results in Sections~\ref{sec:main_results_even_case} and \ref{sec:main_results_odd_case}, but also lead to some tantalizing applications in Section~\ref{sec:interesting_applications}.

\subsection{A simple GCD characterization}\label{subsec:gcd_characterization_of_GibSum_formula}

In this subection, we give our first of two characterizations for the value $\GibSum$. Moreover, we establish why it suffices to consider only the Gibonacci sequences with relatively prime initial conditions, since the value $\GibSum$ for a sequence with non-relatively prime initial values $G_0$ and $G_1$ turns out to be a multiple of the value $\GibSumPrime$ of a related sequence with relatively prime initial values $G_0^\prime$ and $G_1^\prime$.

\begin{theorem}\label{thm:Gib_Sum_Construction}
The largest integer that divides every sum of $k$ consecutive Gibonacci numbers is $\gcd(G_{k+1}-G_{1},G_{k+2}-G_{2})$. That is, $\GibSum = \gcd(G_{k+1}-G_{1},G_{k+2}-G_{2})$.
\end{theorem}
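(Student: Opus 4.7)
The plan is to find a closed form for each $k$-consecutive sum and then exploit the fact that these sums themselves satisfy the Gibonacci recurrence. Define $S_n := \sum_{i=0}^{k-1} G_{n+i}$, so that $\GibSum = \gcd\{S_n : n \geq 1\}$. First I would use Identity~\eqref{eq:fund_identity_4_of_4} of Proposition~\ref{prop:four_fundamental_identities} to telescope: for $n \geq 1$,
$$S_n \;=\; \sum_{i=1}^{n+k-1} G_i \;-\; \sum_{i=1}^{n-1} G_i \;=\; (G_{n+k+1} - G_2) - (G_{n+1} - G_2) \;=\; G_{n+k+1} - G_{n+1}.$$
A direct computation (using $G_2 - G_0 = G_1$) shows that the same closed form holds at $n=0$, giving $S_0 = G_{k+1} - G_1$.

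Next I would observe that $(S_n)_{n \geq 0}$ itself satisfies the Gibonacci recurrence, since
$$S_{n+2} = G_{n+k+3} - G_{n+3} = (G_{n+k+2} - G_{n+2}) + (G_{n+k+1} - G_{n+1}) = S_{n+1} + S_n.$$
Thus $(S_n)_{n \geq 0}$ is itself a Gibonacci-type sequence, with initial values $S_0 = G_{k+1}-G_1$ and $S_1 = G_{k+2}-G_2$.

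Finally, I would invoke the standard fact, provable by running the recurrence in both directions, that for any integer sequence obeying $T_{n+2} = T_{n+1} + T_n$ the quantity $\gcd(T_n, T_{n+1})$ is independent of $n$ and divides every term $T_m$. Applied to $(S_n)$, this yields
$$\gcd\{S_n : n \geq 1\} \;=\; \gcd(S_1, S_2) \;=\; \gcd(S_0, S_1) \;=\; \gcd(G_{k+1}-G_1,\, G_{k+2}-G_2),$$
which is the claimed identity.

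The only real obstacle is bookkeeping rather than mathematics: the factor $G_{k+1}-G_1$ in the theorem corresponds to $S_0$, which is \emph{not} among the sums defining $\GibSum$, since the defining sequence starts at $n=1$ (where $S_1 = G_{k+2}-G_2$). The point is that $(S_n)$ extends naturally one step into the ``prehistory,'' and Fibonacci-GCD invariance forces $\gcd(S_1, S_2) = \gcd(S_0, S_1)$, which is precisely what brings $G_{k+1}-G_1$ back into the formula.
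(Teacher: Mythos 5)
Your proposal is correct, and its second half takes a genuinely different (though closely related) route from the paper's. Both arguments begin identically, telescoping via Identity~\eqref{eq:fund_identity_4_of_4} to get $S_n = G_{n+k+1}-G_{n+1}$. From there the paper applies the addition formula (Identity~\eqref{eq:fund_identity_2_of_4}) to write $S_n = F_{n-1}(G_{k+1}-G_1)+F_n(G_{k+2}-G_2)$ explicitly, and then runs a two-sided inequality argument: the linear-combination form shows $q:=\gcd(G_{k+1}-G_1,G_{k+2}-G_2)$ divides every $S_n$, while a direct computation on the first two terms shows $\GibSum\le q$. You instead observe that $(S_n)$ obeys the Gibonacci recurrence and invoke the invariance of $\gcd(T_n,T_{n+1})$ along such a sequence --- which is exactly the paper's Lemma~\ref{lem:Consec_Gib_GCD}, proved there immediately after this theorem for a different purpose --- together with the one-step backward extension $S_0=S_2-S_1=G_{k+1}-G_1$. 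The two viewpoints are two sides of the same coin (the Fibonacci-coefficient expansion is just the general solution of the recurrence with initial data $S_0,S_1$), but yours is arguably more structural and avoids the addition formula entirely, while the paper's yields the explicit identity \eqref{eq:Gibsum_sequence_equivalence}, which it records for later use. Your handling of the bookkeeping issue --- that $S_0$ is not among the sums defining $\GibSum$ but is recovered by the gcd step $\gcd(S_1,S_2)=\gcd(S_0,S_1)$ --- is exactly right and mirrors the paper's final $\gcd(a,b+a)=\gcd(a,b)$ reduction.
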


\begin{proof}
Fix $n \in \mathbb{N}$ and consider the arbitrary sum $G_n + G_{n+1} + \cdots + G_{n+(k-1)}$ of $k$ consecutive Gibonacci numbers. Then we have the sequence of equalities
\begin{align*}
    \sum_{i=0}^{k-1} G_{n+i}&= \sum_{i=1}^{n+(k-1)}G_i
- \sum_{i=1}^{n-1} G_i\\
&= (G_{(n+k-1)+2}-G_2)-(G_{(n-1)+2}-G_2)
&\mbox{(by Proposition~\ref{prop:four_fundamental_identities}, Identity~\eqref{eq:fund_identity_4_of_4})}\\
&=G_{n+k+1}-G_{n+1}\\
&=G_{(k+1)+n}-G_{n+1}\\
&=F_{n-1}G_{k+1}+F_nG_{k+2}-G_{n+1}
&\mbox{(by Proposition~\ref{prop:four_fundamental_identities}, Identity~\eqref{eq:fund_identity_2_of_4})}\\
&=F_{n-1}G_{k+1}+F_nG_{k+2}-F_nG_{2}-F_{n-1}G_{1}
&\mbox{(by Proposition~\ref{prop:four_fundamental_identities}, Identity~\eqref{eq:fund_identity_2_of_4})}\\
&=F_{n-1}G_{k+1}-F_{n-1}G_1+F_nG_{k+2}-F_nG_2\\
&=F_{n-1}(G_{k+1}-G_1)+F_n(G_{k+2}-G_2).
\end{align*}
Hence our sequence of finite sums of $k$ consecutive Gibonacci numbers can be written as
\begin{align}
    \left( \sum_{i=0}^{k-1} G_{n+i}\right)_{n \geq 1} = \Bigl(F_{n-1}(G_{k+1}-G_1)+F_n(G_{k+2}-G_2)\Bigr)_{n \geq 1}. \label{eq:Gibsum_sequence_equivalence}
\end{align}
Set $q:=\gcd(G_{k+1}-G_1,G_{k+2}-G_2)$. We will show that $q \leq \GibSum$ and that $\GibSum \leq q$, and hence $\GibSum = q$ follows. Since $q$ divides both $G_{k+1}-G_1$ and $G_{k+2}-G_2$, then
$q$ divides every term in our sequence, and therefore $q \leq \GibSum$, as desired.
Next we establish that $\GibSum \leq q$. Observe that the GCD of every term in our sequence is at most the GCD of the first two terms. Consider the GCD of the first two terms. We have the sequence of inequalities and equalities
\begin{align*}
    \GibSum
    &\leq \gcd\Big(F_{0}(G_{k+1}-G_1)+F_1(G_{k+2}-G_2) \;, \; F_{1}(G_{k+1}-G_1)+F_2(G_{k+2}-G_2)\Big)\\
    &=\gcd\big(G_{k+2}-G_2 \;,\; G_{k+1}-G_1+G_{k+2}-G_2\big)\\
    &=\gcd\big(G_{k+2}-G_2 \;,\; G_{k+1}-G_1\big)\\
    &=q,
\end{align*}
where the second equality holds by the property $\gcd(a,b + a) = \gcd(a,b)$. Thus $\GibSum \leq q$, as desired. We conclude that $\GibSum = \gcd(G_{k+1}-G_{1},G_{k+2}-G_{2})$.
\end{proof}

\begin{corollary}
The largest integer to divide every sum of $k$ consecutive Fibonacci numbers is precisely $\gcd(F_{k+1}-F_1,F_{k+2}-F_2)$.  That is, $\FibSum = \gcd(F_{k+1}-1,F_{k+2}-1)$.
\end{corollary}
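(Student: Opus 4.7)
The plan is to invoke Theorem~\ref{thm:Gib_Sum_Construction} with the specific initial conditions $G_0 = 0$ and $G_1 = 1$, which recover the Fibonacci sequence $\left(F_n\right)_{n \geq 0}$. Under this specialization, the symbol $\GibSum$ reduces to $\FibSum$ by the remark immediately following the main convention. Plugging these initial values into the formula $\GibSum = \gcd(G_{k+1} - G_1, G_{k+2} - G_2)$ from Theorem~\ref{thm:Gib_Sum_Construction} yields the first equality $\FibSum = \gcd(F_{k+1} - F_1, F_{k+2} - F_2)$ verbatim.

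To finish, I would observe that $F_1 = 1$ and $F_2 = 1$, so the expression $\gcd(F_{k+1} - F_1, F_{k+2} - F_2)$ simplifies directly to $\gcd(F_{k+1} - 1, F_{k+2} - 1)$, which is the second claimed formula. There is essentially no obstacle: all the work has been done in Theorem~\ref{thm:Gib_Sum_Construction}, and this corollary is simply the Fibonacci-case instantiation. The only conceptual point worth noting is that the Fibonacci sequence is a valid Gibonacci sequence (immediate from the definition), so the hypothesis of the theorem is satisfied without any additional verification.
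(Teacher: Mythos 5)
Your proposal is correct and is exactly the intended argument: the paper states this corollary without proof precisely because it is the immediate specialization of Theorem~\ref{thm:Gib_Sum_Construction} to $G_0=0$, $G_1=1$, followed by the observation that $F_1=F_2=1$. Nothing further is needed.
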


\begin{corollary}
The largest integer to divide every sum of $k$ consecutive Lucas numbers is precisely $\gcd(L_{k+1}-L_{1},L_{k+2}-L_{2})$.  That is, $\LucSum =  \gcd(L_{k+1}-1,L_{k+2}-3)$.
\end{corollary}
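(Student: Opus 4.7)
The plan is to obtain this corollary as an immediate specialization of Theorem~\ref{thm:Gib_Sum_Construction}. That theorem asserts, for an arbitrary Gibonacci sequence with initial values $G_0, G_1 \in \mathbb{Z}$, the equality $\GibSum = \gcd(G_{k+1}-G_1, G_{k+2}-G_2)$. The Lucas sequence is precisely the Gibonacci sequence obtained by taking $G_0 = 2$ and $G_1 = 1$, so $G_n = L_n$ for all $n \geq 0$, and in particular the sums of $k$ consecutive Gibonacci terms are exactly the sums of $k$ consecutive Lucas terms. Consequently $\LucSum = \gcd(L_{k+1}-L_1, L_{k+2}-L_2)$.

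To conclude in the simplified form, I would note the standard initial Lucas values $L_1 = 1$ and $L_2 = L_0 + L_1 = 2 + 1 = 3$, so substituting yields $\LucSum = \gcd(L_{k+1}-1, L_{k+2}-3)$, which is the desired closed form.

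There is no real obstacle here: the entire content of the corollary is the observation that the Lucas sequence fits the Gibonacci framework with the specified initial conditions, together with the trivial computation of $L_1$ and $L_2$. The proof will be a single short paragraph invoking Theorem~\ref{thm:Gib_Sum_Construction} and recording the substitution.
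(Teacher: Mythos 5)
Your proof is correct and is exactly the argument the paper intends: the corollary is stated as an immediate specialization of Theorem~\ref{thm:Gib_Sum_Construction} to the initial values $G_0=2$, $G_1=1$, with $L_1=1$ and $L_2=3$ substituted. Nothing further is needed.
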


After proving the following two results, Lemma~\ref{lem:Consec_Gib_GCD} and Theorem~\ref{thm:relatively_prime_initial_values_only}, we will conclude that it is sufficient to explore only the Gibonacci sequences which have relatively prime initial values.

\begin{lemma}\label{lem:Consec_Gib_GCD}
For all $n \in \mathbb{Z}$, the values $\gcd(G_{n+1},G_{n+2})$ and $\gcd(G_n,G_{n+1})$ coincide. In particular, $\gcd(G_0,G_1)=\gcd(G_{n},G_{n+1})$ holds for all $n\in \mathbb{Z}$
\end{lemma}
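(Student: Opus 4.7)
The proof should be a one-line application of the recurrence combined with a standard property of $\gcd$, followed by an induction to extend to all integers.

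The plan is to first show that $\gcd(G_{n+1}, G_{n+2}) = \gcd(G_n, G_{n+1})$ for an arbitrary $n$. Since the Gibonacci recurrence gives $G_{n+2} = G_{n+1} + G_n$, I would write
\[
\gcd(G_{n+1}, G_{n+2}) = \gcd(G_{n+1}, G_{n+1}+G_n) = \gcd(G_{n+1}, G_n),
\]
where the second equality uses the elementary identity $\gcd(a, a+b) = \gcd(a,b)$ (this is the same property invoked in the proof of Theorem~\ref{thm:Gib_Sum_Construction}). This handles the first claim of the lemma directly, without any case analysis.

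For the ``in particular'' assertion, I would iterate the equality just established. Starting from $n=0$ and applying induction on $n \geq 0$, the chain
\[
\gcd(G_0, G_1) = \gcd(G_1, G_2) = \gcd(G_2, G_3) = \cdots = \gcd(G_n, G_{n+1})
\]
covers all non-negative $n$. To cover negative indices, I would use the fact that the Gibonacci sequence extends to all of $\mathbb{Z}$ via $G_{n-1} = G_{n+1} - G_n$, so the same argument run in reverse yields $\gcd(G_n, G_{n+1}) = \gcd(G_{n-1}, G_n)$ for every integer $n$, and induction downward from $n=0$ finishes the bi-infinite case.

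There is no real obstacle here: the only subtlety worth mentioning explicitly is that the equality $\gcd(a, a+b) = \gcd(a,b)$ must be valid regardless of signs, which follows from interpreting $\gcd$ on $\mathbb{Z}$ as the nonnegative generator of the ideal $(a,b)$. Once that convention is fixed, the lemma is immediate from the recurrence.
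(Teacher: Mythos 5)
Your proposal is correct and matches the paper's proof essentially verbatim: both apply the recurrence $G_{n+2}=G_{n+1}+G_n$ together with the identity $\gcd(a,a+b)=\gcd(a,b)$ and then iterate. Your additional care with negative indices and the sign convention for $\gcd$ on $\mathbb{Z}$ is a small refinement the paper leaves implicit, but the argument is the same.
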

\begin{proof}
Observe the following sequence of equalities.
\begin{align*}
    \gcd(G_{n+1},G_{n+2})&=\gcd(G_{n+1},G_{n+1}+G_n)\\
    &=\gcd(G_{n+1},G_n).
\end{align*}
Hence $\gcd(G_0,G_1)=\gcd(G_n,G_{n+1})$ as desired for all $n\in \mathbb{Z}$.
\end{proof}

\begin{theorem}\label{thm:relatively_prime_initial_values_only}
Fix $G_0, G_1 \in \mathbb{Z}$ and set $d := \gcd(G_0,G_1)$. Then the GCD of every sum of $k$ consecutive Gibonacci numbers in the sequence $\GibSeq$ is $d$ times the GCD of every sum of $k$ consecutive Gibonacci numbers in the new sequence $\{G_n^\prime\}_{n=0}^\infty$ generated by the relatively prime initial conditions $G_0^\prime = \frac{G_0}{d}$ and $G_1^\prime = \frac{G_1}{d}$. In particular, we have the following:
$$ \GibSum = d \cdot \GibSumPrime.$$
\end{theorem}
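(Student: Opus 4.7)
\bigskip

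\noindent\textbf{Proof proposal.} The plan is to reduce everything to the elementary observation that scaling the two initial conditions by $d$ scales every term of the sequence by $d$. First I would establish, by a one-line induction on $n$, that $G_n = d \cdot G_n^\prime$ for all $n \geq 0$: the base cases $n=0,1$ hold by definition of $G_0^\prime$ and $G_1^\prime$, and the inductive step is immediate from the recurrence since $G_n = G_{n-1} + G_{n-2} = d G_{n-1}^\prime + d G_{n-2}^\prime = d G_n^\prime$.

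Once that is in hand, I would finish in one of two equivalent ways. The cleanest route is to apply Theorem~\ref{thm:Gib_Sum_Construction}: using the scaling identity,
\begin{align*}
\GibSum &= \gcd(G_{k+1} - G_1, \; G_{k+2} - G_2)\\
&= \gcd\bigl(d(G_{k+1}^\prime - G_1^\prime), \; d(G_{k+2}^\prime - G_2^\prime)\bigr)\\
&= d \cdot \gcd(G_{k+1}^\prime - G_1^\prime, \; G_{k+2}^\prime - G_2^\prime)\\
&= d \cdot \GibSumPrime,
\end{align*}
where the factoring of $d$ out of the GCD uses the elementary identity $\gcd(da, db) = d \gcd(a,b)$ for $d \geq 0$.

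Alternatively (and without even invoking Theorem~\ref{thm:Gib_Sum_Construction}), one can argue directly from the definition: every finite sum satisfies
\[ \sum_{i=0}^{k-1} G_{n+i} \;=\; d \sum_{i=0}^{k-1} G_{n+i}^\prime, \]
so the entire sequence of $k$-consecutive sums for $\GibSeq$ is obtained from that of $\{G_n^\prime\}$ by multiplying every term by $d$, and the GCD scales accordingly.

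There is essentially no obstacle here; the only subtle point is the implicit assumption that $d = \gcd(G_0, G_1) \geq 0$, which is fine under the usual convention that the GCD is nonnegative (and the statement is trivially correct if $G_0 = G_1 = 0$, in which case both sides are $0$). The real content of the theorem is that it justifies the convention, adopted in the sequel, of restricting attention to Gibonacci sequences with relatively prime initial conditions.
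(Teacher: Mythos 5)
Your proposal is correct and follows essentially the same route as the paper: both apply Theorem~\ref{thm:Gib_Sum_Construction} and pull the factor $d$ out of $\gcd(G_{k+1}-G_1,\,G_{k+2}-G_2)$. The only difference is cosmetic --- you prove the scaling identity $G_n = d\,G_n^\prime$ explicitly by induction, whereas the paper leaves the identification of $\tfrac{G_{k+1}-G_1}{d}$ with $G_{k+1}^\prime - G_1^\prime$ implicit; your remark that one could also argue directly from the definition of the sums, without the GCD characterization, is a valid (and even more elementary) alternative.
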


\begin{proof}
Set $d := \gcd(G_0,G_1)$. By Lemma~\ref{lem:Consec_Gib_GCD}, we have $\gcd(G_{k+1},G_{k+2}) = \gcd(G_0,G_1) = d$ for all $k \in \mathbb{Z}$. By Theorem~\ref{thm:Gib_Sum_Construction}, the largest positive integer that divides every sum of $k$ consecutive Gibonacci numbers is $\gcd(G_{k+1}-G_1,G_{k+2}-G_2)$. Moreover, since $d$ divides $G_0$ and $G_1$, then $d$ divides every term in the sequence $\GibSeq$. In particular, $\frac{G_{k+1}-G_1}{d}$ and $\frac{G_{k+2}-G_2}{d}$ are integers. Observe the sequence of equalities
\begin{align*}
    \gcd(G_{k+1}-G_1,G_{k+2}-G_2)&=\gcd\left(d \cdot  \frac{G_{k+1}-G_1}{d},d \cdot \frac{G_{k+2}-G_2}{d}\right)\\
    &=d \cdot \gcd\left(\frac{G_{k+1}-G_1}{d},\frac{G_{k+2}-G_2}{d}\right).
\end{align*}
Notice that by Theorem~\ref{thm:Gib_Sum_Construction}, the value $\gcd\left(\frac{G_{k+1}-G_1}{d},\frac{G_{k+2}-G_2}{d}\right)$ is the GCD of the sum of $k$ consecutive Gibonacci in the new sequence $\{G_n^\prime\}_{n=0}^\infty$ generated by the initial values $G_0^\prime = \frac{G_0}{d}$ and $G_1^\prime = \frac{G_1}{d}$. Clearly $G_0^\prime$ and $G_1^\prime$ are relatively prime. In particular, we have
$$ \GibSum = d \cdot \GibSumPrime,$$
as desired.
\end{proof}

\boitegrise{
\begin{convention}\label{conv:use_only_relatively_prime_initial_values}
In order to give a complete classification of the GCD of every sum of $k$ consecutive Gibonacci numbers, as a consequence of Theorem~\ref{thm:relatively_prime_initial_values_only}, we need only to consider Gibonacci sequences with relatively prime initial values.
\vspace{-.2in}
\end{convention}}{0.9\textwidth}


\subsection{A generalized Pisano period characterization}\label{subsec:Pisano_characterization_of_GibSum_formula}

As in the setting of the Fibonacci and Lucas sequences modulo $m$, it is well known that the Gibonacci sequence modulo $m$ is also periodic. Hence it makes sense to consider the period $\Gisano$ of this sequence given in the following definition.

\begin{definition}\label{def:Gisano_period}
Let $m \geq 2$. The \textit{generalized Pisano period}, $\Gisano$, of the Gibonacci sequence $\GibSeq$ is the smallest positive integer $r$ such that
$$ G_r \equiv G_0 \pmodd{m} \;\;\text{ and }\;\; G_{r+1} \equiv G_1 \pmodd{m}.$$
In the Fibonacci (respectively, Lucas) setting we denote this period by $\pi_F(m)$ (respectively, $\pi_L(m)$).
\end{definition}

\begin{lemma}\label{lem:m_divides_sum_Pisano}
The value $m$ divides the sum of any $\Gisano$ consecutive Gibonacci numbers.  That is, $m$ divides $\sum_{i=1}^{\Gisano}G_{n+i}$ for any fixed $n\in \mathbb{Z}$.  
\end{lemma}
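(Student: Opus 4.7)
The plan is to reduce the claim to a telescoping identity plus the defining property of the Pisano period. First I would apply Identity~\eqref{eq:fund_identity_4_of_4} of Proposition~\ref{prop:four_fundamental_identities} in the same telescoping manner used in the proof of Theorem~\ref{thm:Gib_Sum_Construction}: writing $r := \Gisano$, the sum $\sum_{i=1}^{r} G_{n+i}$ is the difference of two partial sums $\sum_{i=1}^{n+r} G_i - \sum_{i=1}^{n} G_i$, and hence collapses to a closed form of the shape $G_{n+r+2} - G_{n+2}$ (at least for $n \geq 0$; for negative $n$ a short separate argument using the backward recurrence $G_{j-1} = G_{j+1} - G_j$ reduces to the same expression).

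Next I would invoke the definition of $\Gisano$. By Definition~\ref{def:Gisano_period}, we have $G_r \equiv G_0 \pmod{m}$ and $G_{r+1} \equiv G_1 \pmod{m}$. A routine induction on $j \geq 0$, using the Gibonacci recurrence $G_{j+1} = G_j + G_{j-1}$, then shows that $G_{r+j} \equiv G_j \pmod{m}$ for every $j \geq 0$, and the analogous backward induction using $G_{j-1} = G_{j+1} - G_j$ extends this congruence to all $j \in \mathbb{Z}$. In particular $G_{n+r+2} \equiv G_{n+2} \pmod{m}$ for every $n$, so the closed form from the first step is divisible by $m$.

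The main obstacle, if any, is cosmetic rather than deep: one must be careful that the closed-form telescoping and the periodicity argument both hold for all $n \in \mathbb{Z}$ (as the lemma asserts), not merely $n \geq 1$ where Identity~\eqref{eq:fund_identity_4_of_4} is stated. I would handle this by noting that the Gibonacci recurrence extends uniquely to a two-sided sequence indexed by $\mathbb{Z}$, that the telescoping identity $\sum_{i=a+1}^{b} G_i = G_{b+2} - G_{a+2}$ holds for all integers $a \leq b$ (proved by induction on $b - a$ directly from the recurrence, with no appeal to positivity of indices), and that the Pisano congruence $G_{n+r} \equiv G_n \pmod{m}$ for all $n \in \mathbb{Z}$ follows as above. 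Putting these ingredients together, $m$ divides $G_{n+r+2} - G_{n+2} = \sum_{i=1}^{r} G_{n+i}$, as claimed.
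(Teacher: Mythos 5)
Your proposal is correct and follows essentially the same route as the paper: both arguments combine the telescoping identity $\sum G_i = G_{n+2}-G_2$ (Identity~\eqref{eq:fund_identity_4_of_4}) with the defining congruences of $\Gisano$. The only cosmetic difference is that you telescope directly at a general starting index $n$ and then invoke periodicity of the whole two-sided sequence, whereas the paper first uses periodicity to reduce to the single block $(G_1,\ldots,G_{\Gisano})$ and then telescopes; your extra care about negative indices is a welcome tightening of a point the paper passes over.
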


\begin{proof}
We need to prove $m$ divides the sum of the terms in a generalized Pisano period of any Gibonacci sequence. However, by the periodicity of generalized Pisano periods, it suffices to show that $m$ divides the sum of the terms in the particular generalized Pisano period given by $(G_1, G_2, \ldots, G_{\Gisano})$. By Identity~\eqref{eq:fund_identity_4_of_4} of Proposition~\ref{prop:four_fundamental_identities} we have 
$$\sum_{i=1}^{\Gisano}G_i=G_{\Gisano+2}-G_2.$$
However, by the definition of a generalized Pisano period, $G_{\Gisano+2} \equiv G_2 \pmod{m}$. Hence $m$ divides $G_{\Gisano+2}-G_2$ and therefore also divides $\sum_{i=1}^{\Gisano}G_i$ as desired. It follows that $m$ divides the sum of the terms in the particular generalized Pisano period $(G_1, G_2, \ldots, G_{\Gisano})$, which proves that $m$ divides the sum of any $\Gisano$ consecutive Gibonacci numbers. 
\end{proof}

\begin{remark}
It can be proven that the value $\Gisano$ in Lemma~\ref{lem:m_divides_sum_Pisano} is minimal with respect to the following property: If $s \in \mathbb{N}$ with $s < \Gisano$, then $m$ cannot divide the sum of every $s$ consecutive Gibonacci numbers.
\end{remark}

\begin{theorem}\label{thm:mfactor_iff_period_divides_k}
The value $\Gisano$ divides $k$
if and only if $m$ divides $\GibSum$. 
\end{theorem}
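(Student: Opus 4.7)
The plan is to prove both directions by leveraging the GCD characterization $\GibSum = \gcd(G_{k+1} - G_1,\, G_{k+2} - G_2)$ from Theorem~\ref{thm:Gib_Sum_Construction}, together with the defining property of $\Gisano$.

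For the forward direction, suppose $\Gisano \mid k$, say $k = q \cdot \Gisano$. A routine induction on $q$ using the defining congruences $G_{\Gisano} \equiv G_0 \pmod{m}$ and $G_{\Gisano + 1} \equiv G_1 \pmod{m}$ together with the recurrence $G_n = G_{n-1} + G_{n-2}$ shows that $G_{n+k} \equiv G_n \pmod{m}$ for every $n \geq 0$. In particular, $G_{k+1} \equiv G_1 \pmod{m}$ and $G_{k+2} \equiv G_2 \pmod{m}$, so $m$ divides both $G_{k+1} - G_1$ and $G_{k+2} - G_2$, hence divides their GCD, which is $\GibSum$ by Theorem~\ref{thm:Gib_Sum_Construction}.

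For the reverse direction, suppose $m \mid \GibSum$. By Theorem~\ref{thm:Gib_Sum_Construction}, $m$ divides both $G_{k+1} - G_1$ and $G_{k+2} - G_2$, giving the congruences $G_{k+1} \equiv G_1 \pmod{m}$ and $G_{k+2} \equiv G_2 \pmod{m}$. From the Gibonacci recurrence, $G_k = G_{k+2} - G_{k+1} \equiv G_2 - G_1 = G_0 \pmod{m}$. Thus $k$ itself is a period of the Gibonacci sequence modulo $m$ in the sense of Definition~\ref{def:Gisano_period}.

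The final step is the standard argument that any such period must be a multiple of the minimal one. Write $k = q \cdot \Gisano + r$ with $0 \leq r < \Gisano$ via the division algorithm. By applying the forward-direction periodicity $G_{n + q\Gisano} \equiv G_n \pmod{m}$ to $n = r$ and $n = r+1$, we obtain $G_r \equiv G_{r + q\Gisano} = G_k \equiv G_0 \pmod{m}$ and similarly $G_{r+1} \equiv G_{k+1} \equiv G_1 \pmod{m}$. By the minimality of $\Gisano$ as the smallest positive such period, we must have $r = 0$, so $\Gisano \mid k$. The only mild obstacle is the periodicity induction and the minimality argument, both of which are entirely standard; no substantive new idea is required beyond Theorem~\ref{thm:Gib_Sum_Construction} and Definition~\ref{def:Gisano_period}.
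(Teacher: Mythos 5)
Your proof is correct. The reverse direction is the same as the paper's, except that you spell out the standard division-algorithm/minimality argument that the paper compresses into the single sentence ``by the periodicity of the sequence under a modulus, $\Gisano$ divides $k$''; that extra detail is harmless and arguably a service to the reader. The forward direction is routed slightly differently: the paper invokes Lemma~\ref{lem:m_divides_sum_Pisano} ($m$ divides the sum of any $\Gisano$ consecutive terms, hence any sum of $k = t_0\cdot\Gisano$ consecutive terms, hence their GCD), whereas you push both directions through the GCD characterization $\GibSum = \gcd(G_{k+1}-G_1,\,G_{k+2}-G_2)$ of Theorem~\ref{thm:Gib_Sum_Construction}, deriving $G_{k+1}\equiv G_1$ and $G_{k+2}\equiv G_2 \pmod{m}$ from the induced periodicity. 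Your version is more uniform, since a single tool carries both implications and Lemma~\ref{lem:m_divides_sum_Pisano} becomes unnecessary for this theorem; the paper's version keeps the forward direction closer to the ``sums of consecutive terms'' framing that motivates the whole problem. Either way the content is equivalent, and there is no gap.
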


\begin{proof}
Let $k\in \mathbb{N}$ be fixed. Suppose $\Gisano$ divides $k$. By Lemma~\ref{lem:m_divides_sum_Pisano}, we know that $m$ divides the sum of any $\Gisano$ consecutive Gibonacci numbers. Thus $m$ divides any sum of $t\cdot \Gisano$ consecutive Gibonacci numbers for any $t\in \mathbb{N}$. From our assumption that $\pi_{G_0,G_1}$ divides $k$, it follows that $k=t_0 \cdot \Gisano$ for some $t_0\in \mathbb{N}$. Hence $m$ is a common divisor of any sum of $k$ consecutive Gibonacci numbers, which proves that $m$ divides the greatest common divisor $\GibSum$ as desired. 

Assume $m$ divides $\GibSum$. Then $m$ divides $\gcd(G_{k+2}-G_2,G_{k+1}-G_1)$. Thus $m$ divides $G_{k+2}-G_2$ and $m$ divides $G_{k+1}-G_1$. Hence $G_{k+2}\equiv G_2 \pmod{m}$ and $G_{k+1} \equiv G_1 \pmod{m}$. By the periodicity of the sequence $\GibSeq$ under a modulus, $\Gisano$ divides $k$.
\end{proof}

\begin{theorem}\label{thm:lcm_equiv_definition}
For all $k \geq 1$, we have $\GibSum=\lcm\{m \mid \Gisano \ \text{divides} \ k\}$.
\end{theorem}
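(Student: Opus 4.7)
The plan is to observe that Theorem~\ref{thm:mfactor_iff_period_divides_k} has essentially done all the real work, so the statement of Theorem~\ref{thm:lcm_equiv_definition} will follow from a short set-theoretic argument about divisors and least common multiples.

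First, I would set $N := \GibSum$ and $S := \{m \mid \Gisano \text{ divides } k\}$, and the goal is to show $\lcm(S) = N$. By Theorem~\ref{thm:mfactor_iff_period_divides_k}, for each integer $m \geq 2$ the condition ``$\Gisano$ divides $k$'' is equivalent to the condition ``$m$ divides $N$.'' Hence the set $S$ is precisely the set of integers $m \geq 2$ that divide $N$, i.e., $S$ is (up to whether we include $1$) the set of positive divisors of $N$ that are at least $2$.

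Next, I would invoke the elementary fact that for any positive integer $N$, the least common multiple of its positive divisors equals $N$ itself: indeed, $N$ is a divisor of $N$, so $\lcm$ of the divisor set is at least $N$; conversely, every divisor of $N$ divides $N$, so their $\lcm$ divides $N$ as well. The possible exclusion of $m = 1$ from $S$ does not change this computation, since $\lcm(S \cup \{1\}) = \lcm(S)$ whenever $S$ is nonempty; and if $N = 1$, then $S$ is empty and, under the convention that the $\lcm$ of the empty set is $1$, the equality $\lcm(S) = 1 = N$ still holds. Thus $\lcm(S) = N = \GibSum$, which is the desired identity.

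There is no genuine obstacle here: once Theorem~\ref{thm:mfactor_iff_period_divides_k} identifies $S$ with the divisor set of $\GibSum$, the result is immediate from the observation that a positive integer is the $\lcm$ of its own divisors. The only minor subtlety worth mentioning explicitly in the write-up is the boundary case $m = 1$, which is excluded from $S$ by Definition~\ref{def:Gisano_period} but is harmless for the $\lcm$ computation.
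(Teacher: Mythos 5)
Your proof is correct and follows essentially the same route as the paper: both arguments reduce everything to Theorem~\ref{thm:mfactor_iff_period_divides_k} and then finish with an elementary divisor/lcm observation (the paper via mutual divisibility of $\GibSum$ and the lcm, you via identifying the set with the divisor set of $\GibSum$). If anything, your write-up is slightly more careful than the paper's, since you explicitly handle the boundary cases $m=1$ and $\GibSum=1$.
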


\begin{proof}
For ease of notation, set $\ell(k) := \lcm\{m \mid \Gisano \ \text{divides} \ k\}$. Then it suffices to prove that $\GibSum$ divides $\ell(k)$ and that $\ell(k)$ divides $\GibSum$. Since both $\GibSum$ and $\ell(k)$ are strictly greater than $0$, we only need to show that any divisor of $\GibSum$ is a divisor of $\ell(k)$, and vice versa. Let $d_0$ be a divisor of $\GibSum$. Then by Theorem~\ref{thm:mfactor_iff_period_divides_k}, it follows that $\pi_{G_0,G_1}\!(d_0)$ divides $k$. Hence by definition of $\ell(k)$, we conclude that $d_0$ divides $\ell(k)$ as desired. Now, suppose that $d_1$ is a divisor of $\ell(k)$. Then by definition of $\ell(k)$, it must be that $\pi_{G_0,G_1}\!(d_1)$ divides $k$. Hence by Theorem~\ref{thm:mfactor_iff_period_divides_k}, we conclude that $d_1$ divides $\GibSum$ as desired. 

\end{proof}


\section{Main results for \texorpdfstring{$\GibSum$}{our GCD formula} when \texorpdfstring{$k$}{k} is even}\label{sec:main_results_even_case}

In this section, we provide our main results for the values $\GibSum$ when $k$ is even. There are two cases that we consider; namely, when $k \equiv 0, 4,\, \text{or } 8 \pmod{12}$ given in Subsection~\ref{subsec:0,4,8_even_case} and when $k \equiv 2, 6,\, \text{or } 10 \pmod{12}$ given in Subsection~\ref{subsec:2,6,10_even_case}. From Table~\ref{table: main results} in Section~\ref{sec:introduction}, we see that the second row, which corresponds to $k \equiv 2, 6, 10 \pmod{12}$, gives the same value $L_{k/2}$ regardless if we are considering $\FibSum$, $\LucSum$, or $\GibSum$; that is, no matter which initial values for the sequence $\{G_i\}_{n=0}^\infty$ are chosen, the values $\FibSum$, $\LucSum$, and $\GibSum$ coincide. However in the first row of this table when $k\equiv 0,4,8 \pmod{12}$, it turns out that the value of $\GibSum$ depends on the initial conditions $G_0$ and $G_1$, and hence the values $\FibSum$, $\LucSum$, and $\GibSum$ may differ. More precisely,  for a fixed $k$ such that $k\equiv 0,4,8 \pmod{12}$, we will see in the following subsection that the behavior of these latter three values depends on an easily computed parameter which we denote by $\GibDelta$, defined as $\GibDelta := \gcd(G_0 + G_2, G_1 + G_3)$.

\subsection{The \texorpdfstring{$k\equiv 0,4,8 \pmod{12}$}{k congruent 0,4,8 mod 12} case}\label{subsec:0,4,8_even_case}

Lemmas~\ref{lem:gcd_is_1_or_5} is used to conclude our penultimate result, Lemma~\ref{lem:gcd_same_for_all_n}, which essentially implies that the value of $\GibSum$ is determined soley by the value $k$ and the parameter $\GibDelta$.

\begin{remark}\label{rem:interesting_point_about_GibDelta}
It is worth noting that in this subsection, only our main result, Theorem~\ref{thm: GibSum result for k cong 0,4,8}, involves the value $k$. The two lemmas have no mention of the value $k$, and in fact, say something quite interesting about any Gibonacci sequence $\GibSeq$. In particular, as a consequence of Lemma~\ref{lem:gcd_same_for_all_n}, the value $\gcd(G_0 + G_2, G_1 + G_3)$, which is the parameter $\GibDelta$, equals 1 or 5, and moreover the value $\gcd(G_n + G_{n+2}, G_{n+1} + G_{n+3})$ equals $\GibDelta$ for all $n \geq 0$.
\end{remark}

\begin{lemma}\label{lem:gcd_is_1_or_5}
Fix an integer $i\geq 0$. It follows that the value $\gcd(G_i+G_{i+2},G_{i+1}+G_{i+3})$ is either $1$ or $5$.
\end{lemma}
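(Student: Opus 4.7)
The plan is to reduce the gcd to something visibly constrained by the standing assumption that consecutive Gibonacci terms are coprime. First I would rewrite both arguments in terms of the two ``base'' terms $G_i$ and $G_{i+1}$ using the recurrence: since $G_{i+2} = G_i + G_{i+1}$ and $G_{i+3} = G_i + 2G_{i+1}$, we get
\[
G_i + G_{i+2} = 2G_i + G_{i+1}, \qquad G_{i+1} + G_{i+3} = G_i + 3G_{i+1}.
\]
So the task becomes computing $d := \gcd(2G_i + G_{i+1},\, G_i + 3G_{i+1})$.

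Next I would take integer linear combinations designed to kill each of $G_i$ and $G_{i+1}$ in turn. Specifically,
\[
2(G_i + 3G_{i+1}) - (2G_i + G_{i+1}) = 5G_{i+1}, \qquad 3(2G_i + G_{i+1}) - (G_i + 3G_{i+1}) = 5G_i,
\]
so $d$ divides both $5G_i$ and $5G_{i+1}$, hence $d \mid 5\gcd(G_i, G_{i+1})$.

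Now I invoke Convention~\ref{conv:use_only_relatively_prime_initial_values} (justified by Theorem~\ref{thm:relatively_prime_initial_values_only}) to assume $\gcd(G_0, G_1) = 1$, and then Lemma~\ref{lem:Consec_Gib_GCD} to propagate this coprimality: $\gcd(G_i, G_{i+1}) = \gcd(G_0, G_1) = 1$. Thus $d \mid 5$, so $d \in \{1, 5\}$, which is precisely the claim.

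This argument is essentially a short computation, and there is no real obstacle — the only subtlety is remembering to cite the standing convention on relatively prime initial values, which is what forces $5\gcd(G_i,G_{i+1})$ to collapse to $5$ rather than possibly growing with the sequence. The conclusion sets up the dichotomy that Lemma~\ref{lem:gcd_same_for_all_n} (and the parameter $\GibDelta$) will then exploit to split Theorem~\ref{thm: GibSum result for k cong 0,4,8} into its two cases.
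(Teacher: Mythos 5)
Your proof is correct and follows essentially the same route as the paper's: both arguments show that any common divisor $d$ of $G_i+G_{i+2}$ and $G_{i+1}+G_{i+3}$ must divide $5G_i$ and $5G_{i+1}$, and then invoke $\gcd(G_i,G_{i+1})=\gcd(G_0,G_1)=1$ (via Lemma~\ref{lem:Consec_Gib_GCD} and the standing convention) to conclude $d\mid 5$. Your presentation via explicit integer linear combinations is a bit more streamlined than the paper's congruence-chase modulo $d$, but the underlying idea is identical.
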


\begin{proof}
Suppose $\gcd(G_0,G_1)=1$. Let $d$ be any divisor of $\gcd(G_i+G_{i+2},G_{i+1}+G_{i+3})$. Since $d$ divides the sums $G_i+G_{i+2}$ and $G_{i+1}+G_{i+3}$, we have the congruences
\begin{align}
    G_i+G_{i+2}&\equiv 0 \pmodd{d} \label{eq:n_congruence}\\
    G_{i+1}+G_{i+3}&\equiv 0 \pmodd{d} \label{eq:n+1_congruence}.
\end{align}
We can express the three values $G_{i+2},G_{i+1}$ and $G_{i+3}$, respectively, in terms of $G_i$ as follows:
\begin{align}
    G_{i+2}&\equiv-G_i \pmodd{d}
    &\mbox{(by Congruence~\eqref{eq:n_congruence})} \label{eq:n+2_in_terms_of_n}\\
 &\hfill \nonumber\\
    G_{i+1}&= G_{i+2}-G_i \nonumber\\
    &\equiv -G_i-G_i \pmodd{d} \nonumber
    &\mbox{(by Congruence~\eqref{eq:n+2_in_terms_of_n})} \nonumber\\
    &\equiv -2G_i \pmodd{d}\label{eq:n+1_in_terms_of_n}\\
&\hfill \nonumber\\
    G_{i+3}&=G_{i+2}+G_{i+1} \nonumber\\
    &\equiv -G_i-2G_i \pmodd{d} \nonumber
    &\mbox{(by Congruences~\eqref{eq:n+2_in_terms_of_n} and \eqref{eq:n+1_in_terms_of_n})} \nonumber\\
    &\equiv -3G_i \pmodd{d} \label{eq:n+3_in_terms_of_n}. 
\end{align}
Then by Congruences~\eqref{eq:n+1_congruence}, \eqref{eq:n+1_in_terms_of_n}, and \eqref{eq:n+3_in_terms_of_n}, we have
$$ 0 \equiv G_{i+1} + G_{i+3} \equiv -2 G_i - 3 G_i = -5G_i \pmodd{d},$$
and thus $5G_i \equiv 0 \pmod{d}$. Furthermore, observe that by adding Congruences~\eqref{eq:n_congruence} and \eqref{eq:n+1_congruence} we get that $G_{i+2}+G_{i+4}\equiv 0 \pmod{d}$. Hence $d$ is a divisor of $G_{i+1}+G_{i+3}$ and $G_{i+2}+G_{i+4}$. Therefore $d$ divides $\gcd(G_{i+1}+G_{i+3}, G_{i+2}+G_{i+4})$. Analogous to our previous work, we can express the three values $G_{i+3},G_{i+2}$ and $G_{i}$, respectively, in terms of $G_{i+1}$ to find that $d$ divides $5G_{i+1}$. Since $d$ divides both $5G_i$ and $5G_{i+1}$ and $\gcd(G_i,G_{i+1})=1$, it must be that $d$ divides $5$. Thus $d=1$ or $d=5$.
\end{proof}

\begin{remark}\label{rem:deltagib_attains}
It is worthy to note that both values 1 and 5 are attained in Lemma~\ref{lem:gcd_is_1_or_5}. For $i=0$ in the Fibonacci sequence, we have $\gcd(F_0 + F_2, F_1 + F_3) = \gcd(1,2) = 1$. Moreover, for $i=0$ in the Lucas sequence, we have $\gcd(L_0 + L_2, L_1 + L_3) = \gcd(5,5) = 5$. However, it is not yet clear that for fixed initial values $G_0$ and $G_1$, the values $\gcd(G_n+G_{n+2},G_{n+1}+G_{n+3})$ will be the same for all $n$. However, a consequence of Lemma~\ref{lem:gcd_same_for_all_n} will confirm the latter. But first we need to define what we mean for two Gibonacci sequences to be equivalent (up to shift) modulo $m$ for some $m \geq 2$.
\end{remark}

\begin{definition}\label{def:equivalent_up_to_shift}
Let $m \geq 2$. Let $G$ and $G'$ denote the Gibonacci sequences $\GibSeq$ and $\left(G'_n\right)_{n \geq 0}$, respectively, with corresponding generalized Pisano periods $\pi_{G_0,G_1}(m)$ and $\pi_{G'_0,G'_1}(m)$. We say that $G$ modulo $m$ is \textit{equivalent (up to shift)} to $G'$ modulo $m$ if the following two conditions hold:
\begin{enumerate}[(i)]
\item The values $\pi_{G_0,G_1}(m)$ and  $\pi_{G'_0,G'_1}(m)$ coincide.
\item For some $r \in \mathbb{Z}$, we have $G_{r+n} \equiv G'_n \pmod{m}$ for all $n \in \mathbb{Z}$.
\end{enumerate}
\end{definition}

\begin{remark}\label{rem:gcd(G_0 + G_2, G_1 + G_3) = 5}
It can be shown that the value $\GibDelta$ equals $5$ if and only if the Gibonacci sequence $\GibSeq$ modulo $5$ is equivalent (up to shift) to the Lucas sequence $\left(L_n\right)_{n \geq 0}$ modulo $5$.
\end{remark}

\begin{lemma}\label{lem:gcd_same_for_all_n}
The value $\GibDelta$ equals $1$ or $5$, and we have the following:
$$\gcd(G_n+G_{n+2},G_{n+1}+G_{n+3}) = \GibDelta$$
for all $n \geq 0$.
\end{lemma}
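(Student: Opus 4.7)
The first assertion, that $\GibDelta \in \{1,5\}$, is immediate: simply apply Lemma~\ref{lem:gcd_is_1_or_5} with $i = 0$.

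For the second assertion, my plan is to introduce the auxiliary sequence $(H_n)_{n \geq 0}$ defined by $H_n := G_n + G_{n+2}$, and then exploit the fact that this sequence itself satisfies the Gibonacci recurrence. First I would verify
\[
H_{n+2} = G_{n+2} + G_{n+4} = (G_n + G_{n+1}) + (G_{n+2} + G_{n+3}) = (G_n + G_{n+2}) + (G_{n+1} + G_{n+3}) = H_n + H_{n+1},
\]
so $(H_n)_{n \geq 0}$ is a Gibonacci sequence with initial values $H_0 = G_0 + G_2$ and $H_1 = G_1 + G_3$.

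Next, I would invoke Lemma~\ref{lem:Consec_Gib_GCD}, which states that for any Gibonacci sequence the GCD of two consecutive terms is constant. Applied to $(H_n)$, this yields
\[
\gcd(H_n, H_{n+1}) = \gcd(H_0, H_1) = \gcd(G_0 + G_2, G_1 + G_3) = \GibDelta
\]
for every $n \geq 0$, which is exactly the claim $\gcd(G_n + G_{n+2}, G_{n+1} + G_{n+3}) = \GibDelta$.

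There is essentially no obstacle here; the only subtlety is the indexing check that $(H_n)$ actually obeys the recurrence from the very start (i.e.\ for $n \geq 0$), which follows from the fact that $G_{n+4}$ and $G_{n+2}$ are both defined via the Gibonacci recurrence whenever $n \geq 0$. Hence the lemma reduces cleanly to two prior results, Lemma~\ref{lem:gcd_is_1_or_5} for the dichotomy $\GibDelta \in \{1,5\}$ and Lemma~\ref{lem:Consec_Gib_GCD} for the constancy statement.
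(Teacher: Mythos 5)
Your proposal is correct and follows essentially the same route as the paper: define $H_n = G_n + G_{n+2}$, check that $(H_n)_{n\geq 0}$ satisfies the Gibonacci recurrence, and apply Lemma~\ref{lem:Consec_Gib_GCD}, with the dichotomy $\GibDelta \in \{1,5\}$ coming from Lemma~\ref{lem:gcd_is_1_or_5} at $i=0$. No issues.
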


\begin{proof}
Let $H_n=G_n+G_{n+2}$. Observe that 
\begin{align*}
    H_n+H_{n+1}&=(G_n+G_{n+2})+(G_{n+1}+G_{n+3})\\
    &=(G_n+G_{n+1})+(G_{n+2}+G_{n+3})\\
    &=G_{n+2}+G_{n+4}\\
    &=H_{n+2}.
\end{align*}
Thus the sequence $\left ( H_n \right )_{n\geq 0}$ is itself a generalized Fibonacci sequence. By Lemma~\ref{lem:Consec_Gib_GCD}, we have $\gcd(H_0,H_1)=\gcd(H_n,H_{n+1})$ for all $n\geq 0$.
\end{proof}

We are now ready to prove the main theorem of this subsection. We utilize the parameter $\GibDelta$. Recall from Remark~\ref{rem:deltagib_attains} that $\GibDelta = 1$ for the Fibonacci sequence, $\GibDelta = 5$ for the Lucas sequence, and $\GibDelta = 1 \mbox{ or } 5$ for Gibonacci sequences.

\begin{theorem}\label{thm: GibSum result for k cong 0,4,8}
If $k\equiv 0,4,8\pmod {12}$, then $\gcd(G_{k+1}-G_1,G_{k+2}-G_2) = \GibDelta \cdot F_{k/2}$, where $\GibDelta = \gcd(G_0 + G_2, G_1 + G_3)$. In particular, we conclude the following:
\begin{align*}
    \FibSum &= F_{k/2}\\
    \LucSum &= 5 F_{k/2}\\
    \GibSum &= \GibDelta \cdot F_{k/2}.
\end{align*}
\end{theorem}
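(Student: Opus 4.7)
The plan is to reduce the claim to assembling results already established in the paper. First, I would observe that the hypothesis $k \equiv 0, 4, 8 \pmod{12}$ is equivalent to $k \equiv 0 \pmod 4$, so I may write $k = 4j$ for some integer $j \geq 1$ (since $k \geq 1$ and $k$ a multiple of $4$ forces $k \geq 4$, so $j \geq 1$). With this substitution, the two expressions inside the GCD become $G_{4j+1} - G_1$ and $G_{4j+2} - G_2$, which are precisely the left-hand sides of Propositions~\ref{prop:Gib_prop_1_of_4} and \ref{prop:Gib_prop_2_of_4}.

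Next, I would apply those two propositions to obtain the factorizations
\[
G_{4j+1} - G_1 = F_{2j}(G_{2j} + G_{2j+2}) \qquad \text{and} \qquad G_{4j+2} - G_2 = F_{2j}(G_{2j+1} + G_{2j+3}).
\]
Using the elementary property $\gcd(ab, ac) = a \cdot \gcd(b, c)$ for $a > 0$ (and noting $F_{2j} \geq F_2 = 1 > 0$), I can pull the common factor $F_{2j}$ outside:
\[
\gcd(G_{k+1} - G_1,\, G_{k+2} - G_2) = F_{2j} \cdot \gcd\bigl(G_{2j} + G_{2j+2},\; G_{2j+1} + G_{2j+3}\bigr).
\]
Then Lemma~\ref{lem:gcd_same_for_all_n} collapses the inner GCD to the universal constant $\GibDelta$, giving $F_{2j} \cdot \GibDelta = F_{k/2} \cdot \GibDelta$. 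Combined with Theorem~\ref{thm:Gib_Sum_Construction} (which identifies $\GibSum$ with $\gcd(G_{k+1}-G_1, G_{k+2}-G_2)$), this is exactly the claim $\GibSum = \GibDelta \cdot F_{k/2}$.

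The two corollary statements then follow by direct computation of $\GibDelta$ in the special sequences: $\delta_{F_0,F_1} = \gcd(F_0 + F_2, F_1 + F_3) = \gcd(1,3) = 1$ yields $\FibSum = F_{k/2}$, and $\delta_{L_0,L_1} = \gcd(L_0 + L_2, L_1 + L_3) = \gcd(5,5) = 5$ yields $\LucSum = 5 F_{k/2}$.

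Because all the real work has already been distributed across earlier results — the factorizations of $G_{4j+1}-G_1$ and $G_{4j+2}-G_2$ in Section~\ref{sec:definitions}, the constancy of $\gcd(G_n + G_{n+2}, G_{n+1} + G_{n+3})$ in Lemma~\ref{lem:gcd_same_for_all_n}, and the reduction of $\GibSum$ to a two-term GCD in Theorem~\ref{thm:Gib_Sum_Construction} — there is no genuine obstacle remaining. The only point requiring even a moment's thought is that $F_{2j} > 0$ so that the GCD factorization is literally valid; this is immediate from $j \geq 1$.
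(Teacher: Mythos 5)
Your proposal is correct and follows essentially the same route as the paper: write $k=4j$, factor via Propositions~\ref{prop:Gib_prop_1_of_4} and \ref{prop:Gib_prop_2_of_4}, pull out $F_{2j}$, and invoke Lemma~\ref{lem:gcd_same_for_all_n} to identify the remaining GCD with $\GibDelta$. Your explicit evaluations $\delta_{F_0,F_1}=\gcd(1,3)=1$ and $\delta_{L_0,L_1}=\gcd(5,5)=5$ are also correct (the paper records these in a preceding remark rather than in the proof itself).
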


\begin{proof}
Assume $k\equiv 0,4,8\pmod {12}$. Then $k\equiv 0 \pmod{4}$. Thus $k=4j$ for some $j\in \mathbb{Z}$. Observe the sequence of equalities
\begin{align*}
    \gcd(G_{k+1}-&G_1,G_{k+2}-G_2)\\
    &=\gcd(G_{4j+1}-G_1,G_{4j+2}-G_2)\\
    &=\gcd(F_{2j}(G_{2j}+G_{2j+2}),F_{2j}(G_{2j+1}+G_{2j+3}))
    &\mbox{(by Propositions~\ref{prop:Gib_prop_1_of_4} and \ref{prop:Gib_prop_2_of_4})}\\
    &=F_{2j} \cdot \gcd(G_{2j}+G_{2j+2},G_{2j+1}+G_{2j+3})\\
    &=F_{k/2} \cdot \gcd(G_{2j}+G_{2j+2},G_{2j+1}+G_{2j+3}).
\end{align*}
Observe that from Lemma~\ref{lem:gcd_same_for_all_n}, we know $\gcd(G_{2j}+G_{2j+2},G_{2j+1}+G_{2j+3}) = \GibDelta$. Thus $\gcd(G_{k+1}-G_1,G_{k+2}-G_2) = \GibDelta \cdot F_{k/2}$. We conclude that if $k\equiv 0,4,8\pmod {12}$, then $\FibSum = F_{k/2}$ and $\LucSum = 5 F_{k/2}$ and $\GibSum = \GibDelta \cdot F_{k/2}$.
\end{proof}

\subsection{The \texorpdfstring{$k\equiv 2,6,10 \pmod{12}$}{k congruent 0,4,8 mod 12} case}\label{subsec:2,6,10_even_case}

Whereas the $k\equiv 0,4,8\pmod {12}$ case in Subsection~\ref{subsec:0,4,8_even_case} had variability in the value $\GibSum$ dependent on the initial values $G_0$ and $G_1$, the $k\equiv 2,6,10 \pmod {12}$ case in this subsection is more straightforward since all values $\FibSum$, $\LucSum$, and $\GibSum$ coincide, regardless of the initial values.

\begin{theorem}\label{thm: GibSum result for k cong 2,6,10}
If $k\equiv 2,6,10\pmod {12}$, then $\gcd(G_{k+1}-1,G_{k+2}-3) = L_{k/2}$. In particular, we conclude the following:
\begin{align*}
    \FibSum &= L_{k/2}\\
    \LucSum &= L_{k/2}\\
    \GibSum &= L_{k/2}.
\end{align*}
\end{theorem}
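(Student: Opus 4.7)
The plan is to mirror the proof strategy of Theorem~\ref{thm: GibSum result for k cong 0,4,8}, but using Propositions~\ref{prop:Gib_prop_3_of_4} and \ref{prop:Gib_prop_4_of_4} (which handle indices of the form $4j+3$ and $4j+4$) in place of Propositions~\ref{prop:Gib_prop_1_of_4} and \ref{prop:Gib_prop_2_of_4}. First, I note that $k \equiv 2, 6, 10 \pmod{12}$ is equivalent to $k \equiv 2 \pmod{4}$, so we can write $k = 4j+2$ for some $j \geq 0$; then $k+1 = 4j+3$ and $k+2 = 4j+4$.

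By Theorem~\ref{thm:Gib_Sum_Construction}, we have $\GibSum = \gcd(G_{k+1} - G_1, G_{k+2} - G_2)$, and Propositions~\ref{prop:Gib_prop_3_of_4} and \ref{prop:Gib_prop_4_of_4} together yield
\begin{align*}
    \gcd(G_{k+1} - G_1,\, G_{k+2} - G_2) &= \gcd(L_{2j+1}\, G_{2j+2},\, L_{2j+1}\, G_{2j+3})\\
    &= L_{2j+1} \cdot \gcd(G_{2j+2},\, G_{2j+3}).
\end{align*}
Applying Lemma~\ref{lem:Consec_Gib_GCD} then gives $\gcd(G_{2j+2}, G_{2j+3}) = \gcd(G_0, G_1) = 1$, where the last equality holds by Convention~\ref{conv:use_only_relatively_prime_initial_values}. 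Hence $\GibSum = L_{2j+1} = L_{k/2}$, and this proves the three equalities $\FibSum = \LucSum = \GibSum = L_{k/2}$ simultaneously, since the final answer is independent of the specific initial values (both the Fibonacci and Lucas sequences having coprime initial values).

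I do not anticipate a real obstacle: all of the machinery needed has already been assembled in Sections~\ref{sec:definitions} and \ref{sec:two_equivalent_formulas_for_GibSum}. The only conceptual point worth flagging is why, unlike the $k \equiv 0,4,8 \pmod{12}$ case of Theorem~\ref{thm: GibSum result for k cong 0,4,8}, no $\GibDelta$ factor appears here. The difference is that after factoring out the common $L_{2j+1}$, the remaining GCD is of two \emph{consecutive} Gibonacci terms, which is always $\gcd(G_0, G_1) = 1$ under our convention. By contrast, in the $k \equiv 0,4,8$ case the analogous remaining GCD is of two sums $G_{2j} + G_{2j+2}$ and $G_{2j+1} + G_{2j+3}$, which by Lemma~\ref{lem:gcd_same_for_all_n} equals the parameter $\GibDelta \in \{1, 5\}$, producing the initial-value dependence that is absent in the present case.
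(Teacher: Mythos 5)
Your proposal is correct and follows essentially the same route as the paper's own proof: write $k = 4j+2$, apply Propositions~\ref{prop:Gib_prop_3_of_4} and \ref{prop:Gib_prop_4_of_4} to factor out $L_{2j+1}$, and use $\gcd(G_{2j+2},G_{2j+3})=\gcd(G_0,G_1)=1$. Your closing remark contrasting this case with the $\GibDelta$ factor in the $k\equiv 0,4,8\pmod{12}$ case is accurate but is additional commentary rather than a different argument.
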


\begin{proof}
Assume $k\equiv 2,6,10\pmod {12}$. Then $k\equiv 2 \pmod{4}$. Thus $k=4j+2$ for some $j\in \mathbb{Z}$. Observe the sequence of equalities
\begin{align*}
    \gcd(G_{k+1}-&G_1,G_{k+2}-G_2)\\
    &=\gcd(G_{(4j+2)+1}-G_1,G_{(4j+2)+2}-G_2)\\
    &=\gcd(G_{4j+3}-G_1,G_{4j+4}-G_2)\\
    &=\gcd(L_{2j+1}G_{2j+2},L_{2j+1}G_{2j+3})
    &\mbox{(by Propositions~\ref{prop:Gib_prop_3_of_4} and \ref{prop:Gib_prop_4_of_4})}\\
    &=L_{2j+1} \cdot \gcd(G_{2j+2},G_{2j+3})\\
    &=L_{2j+1} &\mbox{(since $ \gcd(G_{2j+2},G_{2j+3}) = 1$)}\\
    &=L_{k/2}.
\end{align*}
Thus $\gcd(G_{k+1}-G_1,G_{k+2}-G_2)=L_{k/2}$. We conclude that if $k\equiv 2,6,10\pmod {12}$, then we have $\FibSum = \LucSum = \GibSum = L_{k/2}$.
\end{proof}


\section{Main results for \texorpdfstring{$\GibSum$}{our GCD formula} when \texorpdfstring{$k$}{k} is odd}\label{sec:main_results_odd_case}

The two main results in this section, Theorems~\ref{thm: GibSum result for k cong 3,9} and \ref{thm: GibSum result for k cong 1,5,7,11}, rely on the generalized Pisano period $\Gisano$ being even for all $m>2$. The period $\pi_F(m)$ of the Fibonacci sequence modulo $m$ being even for all $m>2$ is well known and proven in 1960 by Wall \cite{Wall1960}, and a clever short proof was given more recently by Elsenhans and Jahnel~\cite{Elsenhans2010}. Similarly, the period $\pi_L(m)$ of the Lucas sequence modulo $m$ is also even for all $m>2$; however, this well-known result seems to lack a proof in the literature, though it is stated in a number of sources. A corollary to the following lemmas will not only prove that the Fibonacci and Lucas periods are even, but also provides a sufficiency condition on the initial values $G_0$ and $G_1$ that will give the defined $\GibSum$ values we gave in Table~\ref{table: main results} in this case when $k$ is odd.

\subsection{A sufficiency criterion for when \texorpdfstring{$\Gisano$}{generalized Pisano period} is even for all \texorpdfstring{$m>2$}{m greater than two}}

\begin{lemma}\label{lem:D_function_invariance}
Let $\Dfunc$ denote the value $G_{n+1}^2 - G_n G_{n+1} - G_n^2$. Then the following holds:
\begin{align}
    \Dfunc = (-1)^n \cdot \Dinitial \label{eq:Dfunc_invariance}
\end{align}
for all $n \geq 0$. In particular, we have $|\Dfunc| = |\Dinitial|$ for all $n \geq 0$.
\end{lemma}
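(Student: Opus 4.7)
The plan is to reduce Identity~\eqref{eq:Dfunc_invariance} to Proposition~\ref{prop:generalized_Cassini}, the generalized Cassini's identity already at our disposal. Writing out the definition unpacks to $\Dfunc = G_{n+1}^2 - G_n G_{n+1} - G_n^2$. The key algebraic trick is to factor the first two terms as $G_{n+1}(G_{n+1} - G_n)$ and then use the recurrence in reverse, namely $G_{n+1} - G_n = G_{n-1}$, to obtain
\[ \Dfunc = G_{n+1} G_{n-1} - G_n^2 \qquad \text{for } n \geq 1. \]
This last expression is precisely the left-hand side of the generalized Cassini's identity, which equates it to $(-1)^n \cdot \Dinitial$. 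The base case $n = 0$ is immediate, since at $n=0$ the expression $\Dfunc$ is literally the definition of $\Dinitial$, matching $(-1)^0 \cdot \Dinitial$.

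Once Identity~\eqref{eq:Dfunc_invariance} is established, the ``in particular'' claim $|\Dfunc| = |\Dinitial|$ is an immediate consequence of $|(-1)^n| = 1$.

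Alternatively, one can give a self-contained inductive proof that bypasses Proposition~\ref{prop:generalized_Cassini} entirely: the base case $n = 0$ is a tautology, and for the inductive step I would expand $D_{G_{n+1},G_{n+2}}$, substitute $G_{n+2} = G_{n+1} + G_n$ to eliminate $G_{n+2}$, and verify by a short algebraic manipulation that $D_{G_{n+1},G_{n+2}} = -\Dfunc$; combined with the inductive hypothesis this yields the desired sign flip. Either route is entirely routine, and I anticipate no real obstacle here; the main conceptual observation is simply that $\Dfunc$ is a one-step reindexing of the quantity $G_{n+1}G_{n-1} - G_n^2$ appearing in Cassini's identity, so the factor of $(-1)^n$ is inherited directly from there.
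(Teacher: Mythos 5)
Your proof is correct, and your primary route differs from the paper's. The paper proves the lemma by a direct induction on $n$ --- exactly your ``alternative'' route: expand $D_{G_{k+1},G_{k+2}}$, substitute $G_{k+2}=G_{k+1}+G_k$, and observe the sign flip $D_{G_{k+1},G_{k+2}} = -D_{G_k,G_{k+1}}$. Your main argument instead observes that for $n\geq 1$ the quantity $G_{n+1}^2 - G_nG_{n+1} - G_n^2 = G_{n+1}(G_{n+1}-G_n) - G_n^2 = G_{n+1}G_{n-1}-G_n^2$ is literally the left-hand side of the generalized Cassini identity (Proposition~\ref{prop:generalized_Cassini}), so the factor $(-1)^n$ is inherited from there, with $n=0$ being a tautology. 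This is a clean reduction and arguably more conceptual; its only cost is that it leans on Proposition~\ref{prop:generalized_Cassini}, which the paper states with a citation rather than a proof, whereas the paper's induction is fully self-contained (and is in fact essentially a proof of Cassini's identity in disguise). One minor point in your favor: the paper states Cassini for $n\geq 0$, which implicitly invokes $G_{-1}$; your restriction of the reindexing step to $n\geq 1$ with a separate base case sidesteps any worry about negative indices. Either way the lemma is established.
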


\begin{proof}
We prove this by induction on $n$. Identity~\eqref{eq:Dfunc_invariance} clearly holds when $n = 0$. So suppose it holds for some $k \geq 0$, and consider $D_{G_{k+1}, G_{k+2}}$. Then we have
\begin{align*}
    D_{G_{k+1}, G_{k+2}} &= G_{k+2}^2 - G_{k+1} G_{k+2} - G_{k+1}^2\\
    &= (G_k + G_{k+1})^2 - G_{k+1} (G_k + G_{k+1}) - G_{k+1}^2\\
    &= G_k^2 + 2 G_k G_{k+1} + G_{k+1}^2 - G_k G_{k+1} - G_{k+1}^2 - G_{k+1}^2\\
    &= - (G_{k+1}^2 - G_k G_{k+1} - G_k^2)\\
    &= -(-1)^k \cdot \Dinitial\\
    &= (-1)^{k+1} \cdot \Dinitial,
\end{align*}
where the fifth equality holds by the induction hypothesis. Hence Identity~\eqref{eq:Dfunc_invariance} holds for all $n \geq 0$.
\end{proof}

\begin{lemma}\label{lem:Dinitial_congruence}
For all integers $m>2$, the following congruence holds:
$$ (-1)^{\Gisano} \cdot \Dinitial \equiv \Dinitial \pmodd{m}.$$
\end{lemma}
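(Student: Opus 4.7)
The plan is to combine the invariance result just established in Lemma~\ref{lem:D_function_invariance} with the defining property of the generalized Pisano period $\Gisano$. The key observation is that $\Dfunc$ is a polynomial in $G_n$ and $G_{n+1}$, so it respects congruences modulo $m$: if $(G_n, G_{n+1})$ and $(G_{n'}, G_{n'+1})$ agree modulo $m$ componentwise, then so do $\Dfunc$ and $D_{G_{n'}, G_{n'+1}}$.

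First I would apply Lemma~\ref{lem:D_function_invariance} with $n := \Gisano$ to obtain the exact (not merely modular) identity
\[
D_{G_{\Gisano}, G_{\Gisano+1}} \;=\; (-1)^{\Gisano} \cdot \Dinitial.
\]
Next, I would invoke Definition~\ref{def:Gisano_period}, which tells us that $G_{\Gisano} \equiv G_0 \pmod{m}$ and $G_{\Gisano+1} \equiv G_1 \pmod{m}$. Substituting these congruences into the polynomial expression $G_{\Gisano+1}^2 - G_{\Gisano} G_{\Gisano+1} - G_{\Gisano}^2$ yields
\[
D_{G_{\Gisano}, G_{\Gisano+1}} \;\equiv\; G_1^2 - G_0 G_1 - G_0^2 \;=\; \Dinitial \pmodd{m}.
\]

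Chaining these two facts gives $(-1)^{\Gisano} \cdot \Dinitial \equiv \Dinitial \pmod{m}$, which is exactly the claim. There is no real obstacle here: the proof is essentially a one-line consequence of the preceding lemma together with the definition of the generalized Pisano period, and the restriction $m > 2$ is not actually needed for the congruence itself (it will become essential only in the corollary that deduces $\Gisano$ is even, since modulo $2$ one has $-1 \equiv 1$ and the congruence gives no information).
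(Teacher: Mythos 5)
Your proof is correct, and it takes a slightly different route from the paper's. The paper invokes the generalized Cassini identity (Proposition~\ref{prop:generalized_Cassini}), sets $n := \Gisano$ to get $G_{\Gisano+1}G_{\Gisano-1} - G_{\Gisano}^2 = (-1)^{\Gisano}\Dinitial$, and then must use the congruence $G_{\Gisano+i} \equiv G_i \pmod{m}$ at the index $i = -1$, rewriting $G_{-1} = G_1 - G_0$ to land on $\Dinitial$. You instead apply Lemma~\ref{lem:D_function_invariance} directly at $n = \Gisano$ (legitimate, since $\Gisano \geq 1$) and reduce the polynomial $G_{\Gisano+1}^2 - G_{\Gisano}G_{\Gisano+1} - G_{\Gisano}^2$ modulo $m$ using only the two congruences $G_{\Gisano} \equiv G_0$ and $G_{\Gisano+1} \equiv G_1$ that constitute the very definition of $\Gisano$. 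The two identities are algebraically equivalent (substitute $G_{n-1} = G_{n+1} - G_n$ into Cassini), but your version is marginally cleaner: it sidesteps the need to extend the periodicity congruence backwards to index $\Gisano - 1$, a step the paper asserts without comment. Your closing remark that $m > 2$ is irrelevant to this lemma and only matters in Corollary~\ref{cor:even_parity_of_Fib_Luc_Gib} is also accurate.
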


\begin{proof}
By the generalized Cassini's identity, given in Proposition~\ref{prop:generalized_Cassini}, it follows that
$$G_{n+1} G_{n-1} - G_n^2 = (-1)^n \cdot D_{G_0,G_1}.$$ Substituting $\Gisano$ for $n$ in the latter identity, we have
$$G_{\Gisano + 1}\cdot G_{\Gisano - 1} - G_{\Gisano}^2 = (-1)^{\Gisano} \cdot D_{G_0,G_1}.$$
Since $G_{\Gisano + i} \equiv G_i \pmod{m}$ for all $i$, we get the sequence of congruences
\begin{align*}
   (-1)^{\Gisano} \cdot D_{G_0,G_1} &\equiv G_1 G_{-1} - G_0^2 \pmodd{m}\\
   &\equiv G_1 (G_1 - G_0) - G_0^2 \pmodd{m}\\
   &\equiv \Dinitial \pmodd{m},
\end{align*}
and the claim holds.
\end{proof}

\begin{corollary}\label{cor:even_parity_of_Fib_Luc_Gib}
For all $m > 2$, the Pisano periods of the Fibonacci and Lucas sequences are even. In the general setting, if $\Dinitial = \pm 1$ then $\Gisano$ is even for all $m > 2$.
\end{corollary}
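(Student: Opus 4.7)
The plan is to extract the corollary directly from Lemma~\ref{lem:Dinitial_congruence}. I would first rearrange the congruence $(-1)^{\Gisano} \cdot \Dinitial \equiv \Dinitial \pmod{m}$ into the form $\Dinitial \cdot \bigl((-1)^{\Gisano} - 1\bigr) \equiv 0 \pmod{m}$. Then I would argue by contradiction: if $\Gisano$ were odd, the second factor would equal $-2$, and I would obtain the divisibility $m \mid 2 \Dinitial$. This single observation is the engine for everything that follows.

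For the general statement, suppose $\Dinitial = \pm 1$. Then the displayed divisibility collapses to $m \mid 2$, which contradicts the hypothesis $m > 2$; hence $\Gisano$ must be even. The Fibonacci claim is then an immediate consequence, since the Fibonacci sequence satisfies $\Dinitial = 1^2 - 0 \cdot 1 - 0^2 = 1$, so the general statement applies verbatim.

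The main obstacle will be the Lucas case, because the Lucas sequence has $\Dinitial = 1^2 - 2 \cdot 1 - 2^2 = -5$, which does not fall under the clean $\Dinitial = \pm 1$ umbrella. Here my divisibility argument only yields $m \mid 10$, so combined with $m > 2$ there remain exactly two residual moduli to verify: $m = 5$ and $m = 10$. I would dispatch these by directly listing enough terms of the Lucas sequence modulo each value, which reveals $\pi_L(5) = 4$ and $\pi_L(10) = 12$, both even. With those two small computations handled, the Lucas claim is complete and the corollary is established in all three asserted forms.
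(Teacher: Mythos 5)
Your argument is correct, and for the Fibonacci and general $\Dinitial = \pm 1$ cases it coincides with the paper's: both read the parity directly off Lemma~\ref{lem:Dinitial_congruence}. Where you genuinely diverge is the Lucas case. The paper, facing $\Dinitial = -5$, only gets evenness for free when $\gcd(5,m)=1$, and for $m = 5^s t$ it imports Wall's Theorem~2 to write $\pi_L(m) = \lcm\bigl(\pi_L(5^s), \pi_L(t)\bigr)$ and then uses $\pi_L(5) = 4 \mid \pi_L(5^s)$ to force $4 \mid \pi_L(m)$. You instead extract from the congruence the clean necessary condition that an odd period forces $m \mid 2\Dinitial$, so for Lucas the only candidate moduli $m>2$ are the divisors of $10$, namely $m \in \{5, 10\}$, which you kill by the two finite computations $\pi_L(5)=4$ and $\pi_L(10)=12$ (both correct). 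Your route is self-contained and avoids the external citation; it also proves the stronger and reusable fact that for \emph{any} Gibonacci sequence only the finitely many divisors of $2\Dinitial$ exceeding $2$ can have odd generalized Pisano period --- a statement that resonates with the paper's later remark that the odd-period modulus $m=29$ for the $(1,24)$-sequence divides $D_{1,24}=551$. The paper's route, by contrast, yields the sharper structural information that $4$ divides $\pi_L(m)$ whenever $5 \mid m$, which a brute-force check of two moduli does not by itself reveal.
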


\begin{proof}
By Lemma~\ref{lem:Dinitial_congruence}, we have $(-1)^{\Gisano} \cdot \Dinitial \equiv \Dinitial \pmod{m}$ for all $m>2$. We address the Fibonacci, Lucas, and Gibonacci settings in three separate cases.
\smallskip
\newline
\textbf{Case 1}: If $G_0 = 0$ and $G_1 = 1$, then $\Dinitial = 1$, and we have $(-1)^{\pi_F(m)} \equiv 1 \pmod{m}$, which implies that $\pi_F(m)$ is even for all $m>2$.
\medskip
\newline
\textbf{Case 2}: If $G_0 = 2$ and $G_1 = 1$, then $\Dinitial = -5$, and we have $(-1)^{\pi_L(m)} 5 \equiv 5 \pmod{m}$, which implies that $\pi_L(m)$ is even for all $m>2$ when $\gcd(5,m)=1$. If on the other hand $\gcd(5,m) \neq 1$, then $m = 5^s t$ for some $s,t \in \mathbb{N}$ with $\gcd(5,t) = 1$. A consequence of Theorem~2 by Wall yields $\pi_L(m) = \lcm\big(\pi_L(5^s), \pi_L(t)\big)$~\cite{Wall1960}. But since $\pi_L(5)$ divides $\pi_L(5^s)$ and $\pi_L(5) = 4$, then $4$ divides $\lcm\big(\pi_L(5^s), \pi_L(t)\big)$. Hence $\pi_L(m)$ is even for all $m > 2$.
\medskip
\newline
\textbf{Case 3}: In general, for any initial values $G_0$ and $G_1$ with $\Dinitial = \pm 1$, it follows that $(-1)^{\Gisano} \equiv 1 \pmod{m}$. Hence $\Gisano$ is even for all $m > 2$ when $\Dinitial = \pm 1$.
\end{proof}

\subsection{The \texorpdfstring{$k\equiv 3,9 \pmod{12}$}{k congruent 3 or 9} case}\label{subsec:3,9_odd_case}

\begin{lemma}\label{lem:Gibsum_odd_leq_2}
If $k$ is odd and $\Gisano$ is even for all $m>2$, then $\GibSum\leq2$.
\end{lemma}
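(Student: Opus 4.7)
The plan is to leverage the generalized Pisano period characterization of $\GibSum$ given in Theorem~\ref{thm:lcm_equiv_definition}, namely
\[
\GibSum = \lcm\{m \mid \Gisano \text{ divides } k\},
\]
and exploit a simple parity obstruction. The entire argument rests on the observation that an even positive integer cannot divide an odd positive integer, so if $\Gisano$ is even for every $m > 2$, then no such $m$ can satisfy $\Gisano \mid k$ when $k$ is odd.

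More concretely, first I would fix an odd $k \in \mathbb{N}$ and consider the set $S := \{m \mid \Gisano \text{ divides } k\}$ that appears inside the lcm of Theorem~\ref{thm:lcm_equiv_definition}. Since the generalized Pisano period is defined only for moduli $m \geq 2$ (see Definition~\ref{def:Gisano_period}), we have $S \subseteq \{2, 3, 4, \ldots\}$. Next, I would invoke the standing hypothesis of the lemma: for every $m > 2$, the value $\Gisano$ is even. Since $k$ is odd, an even $\Gisano$ cannot divide $k$, so no $m > 2$ belongs to $S$. Therefore $S \subseteq \{2\}$.

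Finally, I would conclude by taking the lcm: either $S = \emptyset$, in which case (adopting the standard convention that an empty lcm equals $1$) we get $\GibSum = 1$, or $S = \{2\}$, in which case $\GibSum = 2$. In either situation, $\GibSum \leq 2$, as claimed.

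I do not anticipate any real obstacle here, since the lemma is essentially an immediate parity corollary of the lcm characterization. The only subtlety worth handling carefully is the edge case $m = 2$ itself, for which the hypothesis of the lemma is silent: one should explicitly note that $m = 2$ could contribute a factor of $2$ to the lcm (and indeed sometimes does, as one sees in the $k \equiv 3, 9 \pmod{12}$ row of Table~\ref{table: main results}), which is exactly why the bound $\GibSum \leq 2$ cannot be sharpened to $\GibSum \leq 1$ from these hypotheses alone.
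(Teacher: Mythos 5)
Your proposal is correct and follows essentially the same route as the paper: it invokes the lcm characterization from Theorem~\ref{thm:lcm_equiv_definition}, observes that an even $\Gisano$ cannot divide an odd $k$ so no modulus $m>2$ contributes, and concludes that the lcm is taken over a set contained in $\{2\}$, hence $\GibSum\leq 2$. Your extra care with the empty-set convention and the $m=2$ edge case is a welcome touch but does not change the argument.
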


\begin{proof}
Suppose that $\Gisano$ is even for all $m>2$ and $k$ is odd.
By Theorem~\ref{thm:lcm_equiv_definition}, we have $\GibSum=\lcm\{m \mid \Gisano \ \text{divides} \ k \}$. Since $\Gisano$ is even for all $m>2$ and $k$ is odd, $\Gisano$ cannot divide $k$ for all $m>2$. Thus $\GibSum = \lcm\{m \mid \Gisano \ \text{divides} \ k \} \leq 2$ as desired. 
\end{proof}

\begin{theorem}\label{thm: GibSum result for k cong 3,9}
If $k\equiv 3,9 \pmod{12}$ and $\Gisano$ is even for all $m>2$, then $\GibSum = 2$. In particular, we conclude the following:
\begin{align*}
    \FibSum &= 2\\
    \LucSum &= 2\\
    \GibSum &= 2 \text{ if $\Dinitial = \pm 1$}.
\end{align*}
\end{theorem}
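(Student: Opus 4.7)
The plan is to sandwich $\GibSum$ between $2$ and $2$. The upper bound is essentially free: since $k \equiv 3, 9 \pmod{12}$ forces $k$ to be odd, and the standing hypothesis of the theorem is exactly the ``$\Gisano$ is even for all $m > 2$'' hypothesis of Lemma~\ref{lem:Gibsum_odd_leq_2}, that lemma immediately gives $\GibSum \leq 2$. The real work lies in establishing the lower bound $2 \mid \GibSum$.

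For the lower bound, I would invoke Theorem~\ref{thm:mfactor_iff_period_divides_k}: the divisibility $2 \mid \GibSum$ holds if and only if $\pi_{G_0,G_1}(2)$ divides $k$. By Convention~\ref{conv:use_only_relatively_prime_initial_values} we may assume $\gcd(G_0, G_1) = 1$, so the reduction $(G_0 \bmod 2,\, G_1 \bmod 2)$ lies in exactly one of the three classes $(0,1)$, $(1,0)$, or $(1,1)$; the pair $(0,0)$ is excluded by coprimality. A brief direct check shows that in each of the three remaining cases the Gibonacci sequence modulo $2$ is a shift of the periodic pattern $\ldots, 0, 1, 1, 0, 1, 1, \ldots$, which has period exactly $3$. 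Hence $\pi_{G_0,G_1}(2) = 3$ for every relatively prime Gibonacci sequence. Since $k \equiv 3, 9 \pmod{12}$ forces $3 \mid k$, we conclude $\pi_{G_0,G_1}(2) \mid k$, and therefore $2 \mid \GibSum$. Combined with the upper bound, this gives $\GibSum = 2$.

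The three individual conclusions then follow by verifying the hypothesis on even generalized Pisano periods in each named setting via Corollary~\ref{cor:even_parity_of_Fib_Luc_Gib}: Case~1 of that corollary handles the Fibonacci sequence ($\Dinitial = 1$), Case~2 handles the Lucas sequence ($\Dinitial = -5$), and Case~3 handles the general Gibonacci sequence under the explicit assumption $\Dinitial = \pm 1$. In all three settings the standing hypothesis of the theorem is satisfied, so $\FibSum = \LucSum = 2$ unconditionally and $\GibSum = 2$ whenever $\Dinitial = \pm 1$. I do not anticipate any real obstacle here; the argument is almost entirely bookkeeping that assembles Lemma~\ref{lem:Gibsum_odd_leq_2}, Theorem~\ref{thm:mfactor_iff_period_divides_k}, and Corollary~\ref{cor:even_parity_of_Fib_Luc_Gib}, with the only piece of new computation being the three-case verification that the mod-$2$ generalized Pisano period is always $3$.
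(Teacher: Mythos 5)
Your proposal is correct and follows essentially the same route as the paper: the upper bound $\GibSum \leq 2$ via Lemma~\ref{lem:Gibsum_odd_leq_2}, the lower bound from $\pi_{G_0,G_1}(2)=3$ together with $3 \mid k$ (the paper asserts the mod-$2$ period via the shifted pattern $1,0,1,1,0,1,\ldots$ rather than your explicit three-case check, and phrases the lower bound through Theorem~\ref{thm:lcm_equiv_definition} instead of Theorem~\ref{thm:mfactor_iff_period_divides_k}, but these are the same argument), and the specializations via Corollary~\ref{cor:even_parity_of_Fib_Luc_Gib}.
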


\begin{proof}
Suppose $k\equiv 3,9 \pmod{12}$ and that $\Gisano$ is even for all $m>2$. Then we have $k \equiv 0 \pmod{3}$.
Since $\gcd(G_0,G_1)=1$, the Gibonacci sequence modulo 2 is equivalent (up to shift) to the sequence
$$1,0,1,1,0,1,1,0,1,1,0,1,1\ldots$$
Hence $\pi_{G_0,G_1}(2)=3$. By assumption, $k\equiv 0 \pmod{3}$ and thus $\pi_{G_0,G_1}(2)$ divides $k$. Therefore by Theorem~\ref{thm:lcm_equiv_definition}, we have $\GibSum=\lcm\{m \mid \Gisano \ \text{divides} \ k \}\geq 2$. However by Lemma~\ref{lem:Gibsum_odd_leq_2}, $\GibSum\leq 2$ since $k$ is odd. We conclude that if $k\equiv 3,9\pmod {12}$, then we have $\FibSum = \LucSum = 2$. Also if $\Dinitial = \pm 1$, then $\Gisano$ is even and $\GibSum = 2$.
\end{proof}

\subsection{The \texorpdfstring{$k\equiv 1,5,7,11 \pmod{12}$}{k congruent 1,5,7,11} case}\label{subsec:1,5,7,11_odd_case}

\begin{theorem}\label{thm: GibSum result for k cong 1,5,7,11}
If $k\equiv 1,5,7,11 \pmod{12}$ and $\Gisano$ is even for all $m>2$, then $\GibSum = 1$. In particular, we conclude the following:
\begin{align*}
    \FibSum &= 1\\
    \LucSum &= 1\\
    \GibSum &= 1 \text{ if $\Dinitial = \pm 1$}.
\end{align*} 
\end{theorem}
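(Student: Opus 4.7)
The plan is to combine Lemma~\ref{lem:Gibsum_odd_leq_2} (which gives $\GibSum \leq 2$) with a direct computation of $\pi_{G_0,G_1}(2)$ to rule out the value $2$, leaving only $\GibSum = 1$. The three conclusions (Fibonacci, Lucas, general $\Dinitial = \pm 1$) then follow immediately from Corollary~\ref{cor:even_parity_of_Fib_Luc_Gib}, which guarantees that $\Gisano$ is even for all $m > 2$ in each of these cases, so that the hypothesis of the theorem applies.

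First I would observe that since $k \equiv 1, 5, 7, 11 \pmod{12}$, the value $k$ is odd. Invoking Lemma~\ref{lem:Gibsum_odd_leq_2} with the assumption that $\Gisano$ is even for all $m > 2$, we immediately conclude that $\GibSum \leq 2$. Hence $\GibSum \in \{1, 2\}$, and it suffices to show that $2$ does not divide $\GibSum$.

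Next I would apply Theorem~\ref{thm:mfactor_iff_period_divides_k}, which says that $2$ divides $\GibSum$ if and only if $\pi_{G_0,G_1}(2)$ divides $k$. By Convention~\ref{conv:use_only_relatively_prime_initial_values} we have $\gcd(G_0, G_1) = 1$, so at least one of $G_0, G_1$ is odd. Working modulo $2$, the three possibilities $(G_0, G_1) \equiv (0,1), (1,0), (1,1) \pmod{2}$ all produce the same cyclic pattern $\ldots, 0, 1, 1, 0, 1, 1, \ldots$ up to shift, which has period $3$. Thus $\pi_{G_0,G_1}(2) = 3$. Since the residues of $1, 5, 7, 11$ modulo $3$ are $1, 2, 1, 2$ respectively, and $k \equiv 1, 5, 7, 11 \pmod{12}$ forces $k \not\equiv 0 \pmod 3$, we conclude that $3 \nmid k$, so $\pi_{G_0,G_1}(2)$ does not divide $k$. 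Therefore $2 \nmid \GibSum$, and combined with $\GibSum \leq 2$ we obtain $\GibSum = 1$.

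Finally, to extract the three specific conclusions, I would note that the hypothesis ``$\Gisano$ is even for all $m > 2$'' is verified by Corollary~\ref{cor:even_parity_of_Fib_Luc_Gib}: unconditionally for the Fibonacci and Lucas cases, and for general Gibonacci sequences whenever $\Dinitial = \pm 1$. This yields $\FibSum = 1$, $\LucSum = 1$, and $\GibSum = 1$ when $\Dinitial = \pm 1$. There is no real obstacle here since both key lemmas have already been established; the proof is simply a book-keeping exercise combining the upper bound from Lemma~\ref{lem:Gibsum_odd_leq_2} with the divisibility constraint from the explicit value $\pi_{G_0,G_1}(2) = 3$. The only mild subtlety is verifying that $\pi_{G_0,G_1}(2) = 3$ independent of which of the three admissible parity patterns $(G_0, G_1) \pmod 2$ occurs, but this is a finite check.
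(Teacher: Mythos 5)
Your proposal is correct and follows essentially the same route as the paper: invoke Lemma~\ref{lem:Gibsum_odd_leq_2} to get $\GibSum \leq 2$, compute $\pi_{G_0,G_1}(2) = 3$ from $\gcd(G_0,G_1)=1$ and note $k \not\equiv 0 \pmod{3}$ to rule out the value $2$, then read off the three conclusions via Corollary~\ref{cor:even_parity_of_Fib_Luc_Gib}. The only cosmetic difference is that you route the exclusion of $2$ explicitly through Theorem~\ref{thm:mfactor_iff_period_divides_k}, whereas the paper folds it directly into the LCM characterization.
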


\begin{proof}
Suppose $k \equiv 1,5,7,11 \pmod{12}$ and that $\Gisano$ is even for all $m>2$. As in the proof of Theorem~\ref{thm: GibSum result for k cong 3,9}, we know the fact that $\gcd(G_0,G_1) = 1$ holds implies that $\pi_{G_0,G_1}(2) = 3$. However, since we have $k\not \equiv 0 \pmod{3}$ it cannot be that $\pi_{G_0,G_1}(2)$ divides the value $k$. Therefore by Lemma~\ref{lem:Gibsum_odd_leq_2}, it follows that $\GibSum=\lcm\{m \mid \Gisano \ \text{divides} \ k \}=1$ as desired. We conclude that if $k\equiv 1,5,7,11 \pmod {12}$, then we have $\FibSum = \LucSum = 1$. Also if $\Dinitial = \pm 1$, then $\GibSum = 1$.
\end{proof}


\section{Interesting applications}\label{sec:interesting_applications}

The Pisano period characterization of the value $\GibSum$ not only yields the GCD of all sums of $k$ consecutive Gibonacci numbers, but also leads to some interesting applications. In this section, we highlight three such applications.

\subsection{Restrictions on the factors of \texorpdfstring{$\GibSum$}{GCD of k consecutive Gibonacci numbers} when \texorpdfstring{$k$}{k} is odd}

While the Fibonacci and Lucas sequences satisfy the property that both $\pi_F(m)$ and $\pi_L(m)$ are even for all $m>2$, this is not the case in general. From this, we may exhibit odd values of $k$ for which $\GibSum$ is greater than two. We provide an example of this below and we place restrictions on the values of $m$ that can make $\Gisano$ odd later in this subsection. Using the generalized Pisano period characterization of $\GibSum$, we place restrictions on the factors of $\GibSum$.

\begin{example} In this example, we will show that $\pi_{1,4}(m)=5$ only when $m=11$. Recall that by Theorems~\ref{thm: GibSum result for k cong 3,9} and \ref{thm: GibSum result for k cong 1,5,7,11}, the value $\GibSum$ is 1 or 2 if $k$ is odd and $\Dinitial = \pm 1$, where $\Dinitial$ was defined to be $G_1^2 - G_0 G_1 - G_0^2$. Hence if $\Dinitial \neq \pm 1$, it is interesting to consider what are the possible values of $\GibSum$ when $k$ is odd. We shall consider the Gibonacci sequence with initial values $G_0 = 1$ and $G_1 = 4$, which we will call the $(1,4)$-Gibonacci sequence. Observe that in this case, we have $G_1^2 - G_0 G_1 - G_0^2 = 11 \neq \pm 1$, and hence the value $\mathcal{G}_{1,4}(k)$ when $k$ is odd is not forced to be 1 or 2, necessarily. We consider the value $\mathcal{G}_{1,4}(k)$ when $k=5$ by computing it in two different ways. First, we write out the $(1,4)$-Gibonacci sequence as follows:
$$1,4,5,9,14,23,37,60,97,157,245,402,647,849,1496, \ldots$$
Consider the first four terms of the sequence $\left( \sum_{i=0}^{k-1} G_{n+i}\right)_{n \geq 1}$ when $k=5$: 
$$(4+5+9+14+23, \ 5+9+14+23+37, \  9+14+23+37+60, 14+23+37+60+97, \ldots ),$$
or equivalently, $(55, 88, 143, 231, \ldots )$. By inspection, one may suspect that $\mathcal{G}_{1,4}(5)$ is 11. This can be affirmed by our simple GCD characterization as follows:
$$\mathcal{G}_{1,4}(5) = \gcd(G_7-G_2,G_6-G_1) = \gcd(60-5,37-4) = \gcd(55,33) = 11.$$
On the other hand, by using the generalized Pisano characterization of $\GibSum$, we know
$$\mathcal{G}_{1,4}(5)=11=\lcm\{ m \ | \ \pi_{1,4}(m) \ \text{divides} \ 5 \}.$$
By Theorem~\ref{thm:mfactor_iff_period_divides_k}, since 11 divides $\mathcal{G}_{1,4}(5)$, it must be that $\pi_{1,4}(11)$ divides 5. Clearly $\pi_{1,4}(11)\neq 1$. Hence $\pi_{1,4}(11)=5$ is forced. Furthermore, since 11 is prime, its only divisors are $1$ and $11$. Hence the only divisors of $\lcm\{ m \ | \ \pi_{1,4}(m) \ \text{divides} \ 5 \}$ can be $1$ or $11$, and again employing Theorem~\ref{thm:mfactor_iff_period_divides_k}, this implies that $\pi_{1,4}(m)$ does not divide 5 for all $m\neq 1,11$. Hence, we can conclude that for the particular Gibonacci sequence with initial values $G_0=1$ and $G_1 = 4$, we know that the only modulus value $m$ that yields $\Gisano = 5$ is the value $m=11$.
\end{example}

\begin{remark}
Observe that in the previous example the values $m=11$ and $D_{1,4}=11$ coincide. When examining the $(1,24)$-Gibonacci sequence, which yields an odd period for $m=29$, we do not have $m=D_{1,24}$. However, the value $m=29$ divides $D_{1,24}=551$.
\end{remark}

\begin{proposition}[Wall, Theorem 8]\label{prop:Wall_theorem_8} If $p$ is prime and $p\equiv 3,7,13,17 \pmod{20}$, then it follows that $\pi_{G_0,G_1}(p^e) = \pi_F(p^e)$.
\end{proposition}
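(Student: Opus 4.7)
My plan is to establish both divisibilities $\pi_{G_0,G_1}(p^e) \mid \pi_F(p^e)$ and $\pi_F(p^e) \mid \pi_{G_0,G_1}(p^e)$; the first holds with no hypothesis on $p$, and the mod-$20$ hypothesis will enter only in the second. For the easy direction, I will set $\rho := \pi_F(p^e)$, observe that by definition $F_\rho \equiv 0$ and $F_{\rho+1} \equiv 1 \pmod{p^e}$, and use the Fibonacci recurrence to conclude $F_{\rho-1} = F_{\rho+1} - F_\rho \equiv 1 \pmod{p^e}$ as well. Substituting into Identity~\eqref{eq:fund_identity_3_of_4} then gives $G_\rho \equiv G_0$ and $G_{\rho+1} \equiv G_1 \pmod{p^e}$, showing that $\rho$ is already a period of the Gibonacci sequence modulo $p^e$; hence $\pi_{G_0,G_1}(p^e) \mid \pi_F(p^e)$.

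For the harder direction, I will write $t := \pi_{G_0,G_1}(p^e)$ and recast the problem in matrix form. With $M := \left(\begin{smallmatrix}1 & 1\\1 & 0\end{smallmatrix}\right)$ one has $M^n = \left(\begin{smallmatrix}F_{n+1} & F_n\\F_n & F_{n-1}\end{smallmatrix}\right)$ and $(G_{n+1},G_n)^{T} = M^n (G_1, G_0)^{T}$, so the definition of $t$ yields $(M^t - I)(G_1, G_0)^T \equiv 0 \pmod{p^e}$. The hypothesis $p \equiv 3, 7, 13, 17 \pmod{20}$ is equivalent to $5$ being a quadratic non-residue modulo $p$, and since $5$ is the discriminant of $x^2 - x - 1$, this polynomial is irreducible mod $p$. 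I will then pass to the ring $R_e := (\mathbb{Z}/p^e\mathbb{Z})[x]/(x^2 - x - 1)$ and check that it is local with unique maximal ideal $(p)$ and residue field $\mathbb{F}_{p^2}$: the quotient $R_e/(p) \cong \mathbb{F}_p[x]/(x^2 - x - 1) \cong \mathbb{F}_{p^2}$ is a field, and every prime of $R_e$ must contain the nilpotent element $p$, forcing $(p)$ to be the unique maximal ideal.

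The bridge between the matrix and ring viewpoints is to use the basis $\{\alpha, 1\}$ of $R_e$ (where $\alpha$ is the image of $x$): multiplication by $\alpha$ then has matrix exactly $M$, and the column vector $(G_1, G_0)^T$ corresponds to the ring element $G_0 + G_1 \alpha$. Under this identification, the matrix condition $(M^t - I)(G_1, G_0)^T \equiv 0 \pmod{p^e}$ becomes the ring identity $(\alpha^t - 1)(G_0 + G_1 \alpha) = 0$ in $R_e$. Because $\gcd(G_0, G_1) = 1$ (Convention~\ref{conv:use_only_relatively_prime_initial_values}), at least one of $G_0, G_1$ is nonzero modulo $p$, so $G_0 + G_1 \alpha$ lies outside the maximal ideal $(p)$ and is thus a unit in the local ring $R_e$. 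Cancelling it gives $\alpha^t = 1$ in $R_e$, which reads back as $F_t \equiv 0$ and $F_{t-1} \equiv 1 \pmod{p^e}$, i.e., $M^t \equiv I \pmod{p^e}$. Therefore $\pi_F(p^e) \mid t$, and combined with the first direction this yields $\pi_{G_0,G_1}(p^e) = \pi_F(p^e)$.

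The main obstacle, as I see it, is recognizing the correct algebraic framework: once one passes to $R_e$ and observes that the mod-$20$ hypothesis makes it local, the entire argument collapses into a single unit cancellation. The hypothesis on $p$ is used in exactly one place (irreducibility of $x^2 - x - 1$ mod $p$, ensuring locality of $R_e$), and the coprimality of $G_0$ and $G_1$ is used in exactly one place (to force $G_0 + G_1 \alpha$ to be a unit). Without this lens, one would instead have to grind through the two simultaneous congruences $F_t G_1 + (F_{t-1} - 1) G_0 \equiv 0$ and $F_t G_0 + (F_{t+1} - 1) G_1 \equiv 0 \pmod{p^e}$ and invoke the irreducibility of $x^2-x-1$ somewhat awkwardly to solve them.
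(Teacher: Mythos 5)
Your proof is correct. Note that the paper offers no proof of this statement at all: it is quoted verbatim as Theorem~8 of Wall's 1960 paper \cite{Wall1960}, so your self-contained argument is genuinely additional content rather than a variant of an argument in the text. Both directions check out. The easy divisibility $\pi_{G_0,G_1}(p^e)\mid\pi_F(p^e)$ via Identity~\eqref{eq:fund_identity_3_of_4} indeed needs no hypothesis on $p$, and your reduction of the hard direction to a unit cancellation is clean: the condition $p\equiv 3,7,13,17\pmod{20}$ is exactly ``$p$ odd, $p\neq 5$, and $5$ a quadratic nonresidue mod $p$'' (by reciprocity $\bigl(\tfrac{5}{p}\bigr)=\bigl(\tfrac{p}{5}\bigr)$, and those residues are $\pm 2$ mod $5$), so $x^2-x-1$ is irreducible mod $p$, the ring $R_e=(\mathbb{Z}/p^e\mathbb{Z})[x]/(x^2-x-1)$ is local with residue field $\mathbb{F}_{p^2}$, and $G_0+G_1\alpha$ lies outside the maximal ideal precisely because $p\nmid\gcd(G_0,G_1)$, which Convention~\ref{conv:use_only_relatively_prime_initial_values} guarantees (and which is genuinely needed: the statement fails for non-coprime initial values). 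Cancelling the unit gives $\alpha^t=1$, hence $F_t\equiv 0$ and $F_{t-1}\equiv F_{t+1}\equiv 1\pmod{p^e}$, as you say. Wall's original proof is elementary and manipulates the simultaneous congruences directly; your local-ring formulation is more conceptual and makes transparent exactly where each hypothesis enters. Two small points worth making explicit in a final write-up: first, that the minimal period divides every period (because the Gibonacci sequence mod $m$ is purely periodic, the shift matrix $M$ being invertible mod $m$ since $\det M=-1$), a fact you use tacitly in both directions; second, that $\{1,\alpha\}$ really is a free $\mathbb{Z}/p^e\mathbb{Z}$-basis of $R_e$ (immediate since $x^2-x-1$ is monic), so that $\alpha^t=F_t\alpha+F_{t-1}=1$ forces the two Fibonacci congruences separately.
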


Since $\pi_F(m)$ is even for all $m>2$, Proposition~\ref{prop:Wall_theorem_8} yields the immediate corollary.

\begin{corollary}\label{coro:even_Gib_periods}
If $p$ is prime and $p\equiv 3,7,13,17 \pmod{20}$, then $\GisanoP$ is even no matter the choice of (coprime) initial conditions. 
\end{corollary}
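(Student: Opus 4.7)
The plan is to deduce this corollary immediately by chaining two earlier results: Proposition~\ref{prop:Wall_theorem_8} and the Fibonacci case of Corollary~\ref{cor:even_parity_of_Fib_Luc_Gib}. First I would specialize Wall's theorem (Proposition~\ref{prop:Wall_theorem_8}) to the exponent $e = 1$, which gives the identity $\GisanoP = \pi_F(p)$ for every prime $p$ congruent to $3, 7, 13$, or $17$ modulo $20$, valid for any coprime initial conditions $G_0, G_1$.

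Next I would observe that every prime $p$ in the congruence classes $\{3, 7, 13, 17\} \pmod{20}$ is odd and satisfies $p \geq 3 > 2$; in particular $p > 2$ so the hypothesis of the Fibonacci case of Corollary~\ref{cor:even_parity_of_Fib_Luc_Gib} (namely, that $m > 2$) is met. Applying that corollary with $m := p$ shows that $\pi_F(p)$ is even.

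Combining these two facts immediately yields $\GisanoP = \pi_F(p) \equiv 0 \pmodd{2}$, so $\GisanoP$ is even regardless of the (coprime) choice of initial values.

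The proof is essentially a one-line chaining argument, so there is no substantive obstacle; the only thing to double-check is that the cited Wall theorem genuinely holds \emph{uniformly in} $G_0, G_1$ (i.e., the equality $\pi_{G_0,G_1}(p^e) = \pi_F(p^e)$ is independent of initial conditions as long as they are coprime), since that uniformity is exactly what is needed to conclude ``no matter the choice of initial conditions'' in the corollary.
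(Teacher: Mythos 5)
Your argument is correct and is exactly the paper's: the authors likewise obtain the corollary immediately by combining Proposition~\ref{prop:Wall_theorem_8} (with $e=1$) with the evenness of $\pi_F(m)$ for all $m>2$. Your closing caveat about uniformity in $G_0,G_1$ is well taken but already built into the statement of Proposition~\ref{prop:Wall_theorem_8}, so nothing further is needed.
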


\begin{theorem}\label{thm:Gibsum_factor_restrictions}
There exists no prime $p\equiv 3,7,13,17\pmod{20}$ that can be a factor of $\GibSum$ if $k$ is odd. 
\end{theorem}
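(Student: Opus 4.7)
The plan is to prove this by contradiction using only the two immediately preceding results: Theorem~\ref{thm:mfactor_iff_period_divides_k} (which links divisibility of $\GibSum$ by $m$ to $\Gisano \mid k$) and Corollary~\ref{coro:even_Gib_periods} (which says $\GisanoP$ is even whenever $p$ is a prime with $p \equiv 3,7,13,17 \pmod{20}$).

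First I would suppose, for contradiction, that there exists an odd $k$ and a prime $p \equiv 3,7,13,17 \pmod{20}$ such that $p$ divides $\GibSum$. By our standing convention (Convention~\ref{conv:use_only_relatively_prime_initial_values}), we may assume $\gcd(G_0, G_1) = 1$, so Corollary~\ref{coro:even_Gib_periods} applies and gives that $\GisanoP$ is even. Next, since $p \mid \GibSum$, Theorem~\ref{thm:mfactor_iff_period_divides_k} forces $\GisanoP \mid k$.

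The contradiction is then immediate: an even positive integer cannot divide an odd positive integer $k$, so $\GisanoP \mid k$ fails. Hence no such prime $p$ can occur as a factor of $\GibSum$ when $k$ is odd.

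There is no real obstacle here; the proof is essentially a one-line chaining of Theorem~\ref{thm:mfactor_iff_period_divides_k} with Corollary~\ref{coro:even_Gib_periods}. The only subtlety worth flagging explicitly is the use of the coprimality convention on $(G_0, G_1)$ to legitimately invoke Corollary~\ref{coro:even_Gib_periods}; everything else is a parity argument.
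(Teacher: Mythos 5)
Your proof is correct and is essentially identical to the paper's: both chain Corollary~\ref{coro:even_Gib_periods} (evenness of $\GisanoP$ for such primes) with Theorem~\ref{thm:mfactor_iff_period_divides_k} and conclude by the parity obstruction that an even period cannot divide an odd $k$. Your explicit note about invoking the coprimality convention is a small but welcome clarification that the paper leaves implicit.
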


\begin{proof}
Recall that by Theorem~\ref{thm:mfactor_iff_period_divides_k}, the value $\Gisano$ divides $k$
if and only if $m$ divides $\GibSum$.  So if $p$ is a prime such that $p\equiv 3,7,13,17 \pmod{20}$, then by Corollary~\ref{coro:even_Gib_periods}, we know that $\GisanoP$ is even. However, if $k$ is odd then surely $\GisanoP$ cannot divide $k$. Hence no prime of the form $p\equiv 3,7,13,17\pmod{20}$ can be a factor of $\GibSum$ if $k$ is odd. 
\end{proof}


\subsection{Largest modulus \texorpdfstring{$m$}{m} yielding a given Pisano period value \texorpdfstring{$\pi_F(m)$}{}}

It is well known that for a given modulus $m$, the corresponding Pisano period $\pi_F(m)$ is bounded above by $6m$. This problem was proposed by Freyd in 1990 and answered by Brown in 1992~\cite{Brown1992}. Moreover, this upper bound is achieved, for instance, when $m=10$ since $\pi_F(10) = 60$. Furthermore, since the value $\Gisano$ divides $\pi_F(m)$, the value $6m$ also serves as an upper bound on any Gibonacci sequence. Hence, this question of an upper bound for any generalized Pisano period of a given modulus $m$ is answered. However, a different but related question can be considered.

\begin{question}
For a given period $k$ and a Gibonacci sequence with initial values $G_0$ and $G_1$, what is the largest modulus value $m$ such that $\Gisano = k$? We answer this question in the Fibonacci setting.\footnote{This question was explored in 2018 in the Fibonacci setting by Dishong and Renault from an algorithmic approach that allows a computer to calculate all values $m$ such that $\pi_F(m) = k$~\cite{Dishong2018}. However, we answer this question from a theoretical approach utilizing the generalized Pisano period characterization of $\GibSum$.} We invite the interested reader to explore this problem in the Lucas and Gibonacci setting.
\end{question}

The following example exhibits how this question may be approached in the Fibonacci and Lucas settings, in particular, from the generalized Pisano period characterization of $\GibSum$.

\begin{example}
Let us attempt to compute the largest modulus value $m$ that yields a Pisano period $\pi_F(m)$ equal to 60. Setting $k:=60$ in Theorem~\ref{thm: GibSum result for k cong 0,4,8}, we know $\mathcal{F}(60) = F_{30} = \numprint{832040}$. By Theorem~\ref{thm:mfactor_iff_period_divides_k}, we know the following:
$$ \pi_F(m) \text{ divides } 60 \text{ if and only if } m \text{ divides } \mathcal{F}(60).$$
Hence we can conclude that if $\pi_F(m) = 60$, then $m$ divides \numprint{832040}. So certainly, we have \numprint{832040} as a potential maximum value $m$ that makes $\pi_F(m) = 60$, but the question that remains is ``Does $\pi_F(\numprint{832040})$ indeed equal 60?'' A simple computer computation reveals that this is so. Hence the largest modulus value $m$ that yields a Pisano period $\pi_F(m)$ equal to 60 is $m = \numprint{832040}$. In Theorem~\ref{thm:max_fib_modulus}, we will prove that in general for $k \equiv 0 \pmod{4}$ that $\mathcal{F}(k)$ (or equivalently $F_{k/2}$) is the actual largest modulus that produces a Pisano period equal to $k$.
\end{example}

Before we prove our main result, Theorem~\ref{thm:max_fib_modulus}, we need the following known results on the periods $\pi_F(m)$ when $m$ is a Fibonacci or Lucas number.

\begin{lemma}\label{lem:Pisano_periods_of_Fib_numbers}
The following identities hold:
\begin{align}
    \pi_F (F_i) &= \begin{cases} 2i, &\mbox{if $i \geq 4$ and even;} \\
    4i, &\mbox{if $i \geq 5$ and odd}.\end{cases} \label{eq:Pisano_of_Fib_number}\\
    \pi_F (L_i) &= \begin{cases} 4i, &\mbox{if $i \geq 2$ and even;} \\
    2i, &\mbox{if $i \geq 3$ and odd}.\end{cases} \label{eq:Pisano_of_Luc_number}
\end{align}
In particular, it follows that $\mathrm{range}(\pi_F) = \{3\} \cup \{n \in 2\mathbb{Z} \mid n \geq 6\}$.
\end{lemma}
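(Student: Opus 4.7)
The plan is to reduce all four period formulas to two central congruences, combined with $F_{2i} = F_i L_i$ from Identity~\eqref{eq:fund_identity_1.5_of_4}:
\[
F_{2i+1} \equiv (-1)^i \pmod{F_i} \quad \text{and} \quad F_{2i+1} \equiv (-1)^{i+1} \pmod{L_i}.
\]
Both congruences stem from the identity $F_{2i+1} = F_i^2 + F_{i+1}^2$, a specialization of Identity~\eqref{eq:fund_identity_2_of_4} with $m = i+1,\, n = i$. For the first, I reduce $F_{i+1} \equiv F_{i-1} \pmod{F_i}$ and apply Cassini $F_{i-1} F_{i+1} - F_i^2 = (-1)^i$ (specializing Proposition~\ref{prop:generalized_Cassini}). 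For the second, combining the same identity with $L_i = F_{i+1} + F_{i-1}$ and Cassini yields the cleaner integer statement $F_{2i+1} = L_i F_{i+1} + (-1)^{i+1}$, from which the congruence is immediate.

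From these, upper bounds on all four periods fall out simultaneously. When $i$ is even, $(F_{2i}, F_{2i+1}) \equiv (0, 1) \pmod{F_i}$ gives $\pi_F(F_i) \mid 2i$; when $i$ is odd, squaring via $F_{4i+1} = F_{2i}^2 + F_{2i+1}^2 \equiv 0 + 1 \pmod{F_i}$ upgrades the sign and yields $\pi_F(F_i) \mid 4i$. Symmetrically, $\pi_F(L_i) \mid 2i$ when $i$ is odd and $\pi_F(L_i) \mid 4i$ when $i$ is even.

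Exactness requires two rank-of-apparition facts. The classical $F_i \mid F_n \iff i \mid n$ (a consequence of $\gcd(F_m, F_n) = F_{\gcd(m,n)}$) forces $\pi_F(F_i)$ to be a multiple of $i$; the candidate $\pi_F(F_i) = i$ fails because $F_{i+1} \equiv F_{i-1} \not\equiv 1 \pmod{F_i}$ once $i \geq 4$. For $L_i$ I will show the rank is exactly $2i$ when $i \geq 2$: if $L_i \mid F_n$, then $L_i \mid \gcd(F_n, F_{2i}) = F_{\gcd(n, 2i)}$, and since $F_j \leq F_i < L_i$ for every $j \leq i$, the gcd must equal $2i$. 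This leaves $\pi_F(L_i) \in \{2i, 4i\}$, and in the even-$i$ case the value $2i$ is ruled out because $F_{2i+1} \equiv -1 \not\equiv 1 \pmod{L_i}$.

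The range statement is then bookkeeping. Corollary~\ref{cor:even_parity_of_Fib_Luc_Gib} guarantees $\pi_F(m)$ is even for every $m > 2$, and $\pi_F(2) = 3$ supplies the only odd outlier. The formula $\pi_F(L_i) = 2i$ for odd $i \geq 3$ produces every $n \equiv 2 \pmod 4$ with $n \geq 6$, and $\pi_F(F_i) = 2i$ for even $i \geq 4$ produces every $n \equiv 0 \pmod 4$ with $n \geq 8$; together these cover every even $n \geq 6$. To exclude the remaining even values, $\pi_F(m) = 2$ would force $m \mid F_2 = 1$, impossible for $m \geq 2$, while $\pi_F(m) = 4$ would force $m \mid F_4 = 3$, but direct check gives $\pi_F(3) = 8 \neq 4$. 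The principal obstacle throughout will be the rank-of-apparition argument for $L_i$, as distinguishing $2i$ from $4i$ in the even case requires both the gcd-based lower bound and the explicit sign of $F_{2i+1} \bmod L_i$.
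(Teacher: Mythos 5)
Your proof is correct, but it takes a genuinely different route from the paper: the paper does not prove this lemma at all, it simply cites Stanley (1971, 1976) and Ehrlich (1989) for Identities~\eqref{eq:Pisano_of_Fib_number} and \eqref{eq:Pisano_of_Luc_number} and for the range statement. What you supply is a self-contained argument, and its two pillars check out. The congruences $F_{2i+1}\equiv(-1)^i\pmod{F_i}$ and $F_{2i+1}=L_iF_{i+1}+(-1)^{i+1}$ both follow correctly from $F_{2i+1}=F_i^2+F_{i+1}^2$ (Identity~\eqref{eq:fund_identity_2_of_4} with $m=i+1$, $n=i$) together with Cassini, and combined with $F_{2i}=F_iL_i$ they give all four upper bounds. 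Your rank-of-apparition argument for $L_i$ is the nontrivial step and is sound: every proper divisor of $2i$ is at most $i$, and $0<F_j\le F_i<L_i$ for $1\le j\le i$, so $L_i\mid F_n$ forces $2i\mid n$, hence $2i\mid\pi_F(L_i)$. One small expositional gap: in the exactness step for $\pi_F(F_i)$ with $i$ odd, ruling out the candidate $i$ leaves both $2i$ and $4i$ on the table; you need to say explicitly that $2i$ fails because $F_{2i+1}\equiv(-1)^i=-1\not\equiv 1\pmod{F_i}$ (which you have already established, and which mirrors the sentence you do include for $L_i$ with $i$ even). With that sentence added, the argument is complete, and the range bookkeeping (evenness from Corollary~\ref{cor:even_parity_of_Fib_Luc_Gib}, $\pi_F(2)=3$, exclusion of $2$ and $4$ via $m\mid F_2$ and $m\mid F_4$) is correct. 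The payoff of your approach is that it makes the paper self-contained on a point it otherwise outsources; the cost is about a page of additional argument relying on the classical facts $\gcd(F_m,F_n)=F_{\gcd(m,n)}$ and $F_i\mid F_n\iff i\mid n$, which the paper nowhere states and which you would need to cite or prove.
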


\begin{proof}
Identity~\eqref{eq:Pisano_of_Fib_number} was first proven in 1971 by Stanley~\cite{Stanley1971} (and independently in 1989 by Ehrlich~\cite{Ehrlich1989}, who was apparently unaware of Stanley's result). Identity~\eqref{eq:Pisano_of_Luc_number} was proven in 1976 by Stanley~\cite{Stanley1976}. Moreover, in that same paper Stanley states that the range of $\pi_F$ is all even integers greater than 4 though omits the trivial result that $\pi_F(2)=3$ and hence we have $\mathrm{range}(\pi_F) = \{3\} \cup \{n \in 2\mathbb{Z} \mid n \geq 6\}$, as desired.
\end{proof}

\begin{theorem}\label{thm:max_fib_modulus}
Let $k \geq 6$ be an even integer, and set $m_F:=\FibSum$. Then $m_F$ is the largest modulus value yielding a Fibonacci period of $k$. More precisely, $\pi_F(m_F) = k$ and for all $m > m_F$, we have $\pi_F(m) \neq k$. In particular, we have the following:
$$m_F = \begin{cases} F_{k/2} &\mbox{if } k \equiv 0 \pmodd{4}, \\
L_{k/2} &\mbox{if } k \equiv 2 \pmodd{4}. \end{cases}$$
\end{theorem}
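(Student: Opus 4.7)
The plan is to combine three prior results: (i) the explicit formula for $\FibSum$ when $k$ is even, given in Theorems~\ref{thm: GibSum result for k cong 0,4,8} and \ref{thm: GibSum result for k cong 2,6,10}, (ii) the divisibility characterization of Theorem~\ref{thm:mfactor_iff_period_divides_k} (in the Fibonacci specialization: $\pi_F(m) \mid k \iff m \mid \FibSum$), and (iii) Stanley's Pisano period formulas recorded in Lemma~\ref{lem:Pisano_periods_of_Fib_numbers}. The stated value of $m_F$ is already immediate from (i): when $k \equiv 0 \pmod 4$, the congruence $k \equiv 0, 4, 8 \pmod{12}$ gives $\FibSum = F_{k/2}$, and when $k \equiv 2 \pmod 4$, the congruence $k \equiv 2, 6, 10 \pmod{12}$ gives $\FibSum = L_{k/2}$. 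So the work is in verifying the two claims about $m_F$: that $\pi_F(m_F) = k$, and that no larger modulus produces period $k$.

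First I would verify $\pi_F(m_F) = k$ directly from Lemma~\ref{lem:Pisano_periods_of_Fib_numbers}. If $k \equiv 0 \pmod 4$, then the index $i := k/2$ is even with $i \geq 4$ (since $k \geq 8$ in this subcase), so $\pi_F(F_{k/2}) = 2(k/2) = k$. If $k \equiv 2 \pmod 4$, then $i := k/2$ is odd with $i \geq 3$ (since $k \geq 6$), so $\pi_F(L_{k/2}) = 2(k/2) = k$. Both cases yield $\pi_F(m_F) = k$ on the nose.

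Next, for maximality, I would argue by contradiction using Theorem~\ref{thm:mfactor_iff_period_divides_k}. Suppose some $m > m_F$ satisfied $\pi_F(m) = k$. Then in particular $\pi_F(m)$ divides $k$, which by Theorem~\ref{thm:mfactor_iff_period_divides_k} forces $m$ to divide $\FibSum = m_F$. But $m_F > 0$ and $m \mid m_F$ together imply $m \leq m_F$, contradicting $m > m_F$. Hence no modulus strictly greater than $m_F$ can produce a Fibonacci period equal to $k$, completing the proof.

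I do not expect any step to be a genuine obstacle here; the theorem is essentially a packaging of earlier machinery. The only care required is to confirm the edge cases of Lemma~\ref{lem:Pisano_periods_of_Fib_numbers} at the boundary $k = 6$ and $k = 8$, which is precisely why the hypothesis $k \geq 6$ is imposed: it guarantees that the index $i = k/2$ falls within the ranges ($i \geq 3$ odd, $i \geq 4$ even) for which Stanley's formulas apply.
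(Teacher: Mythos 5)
Your proposal is correct and follows essentially the same route as the paper: extract $m_F$ from Theorems~\ref{thm: GibSum result for k cong 0,4,8} and \ref{thm: GibSum result for k cong 2,6,10}, verify $\pi_F(m_F)=k$ via Lemma~\ref{lem:Pisano_periods_of_Fib_numbers} with the same index-range checks, and obtain maximality from the divisibility equivalence of Theorem~\ref{thm:mfactor_iff_period_divides_k}. The only cosmetic difference is that you phrase the maximality step as a contradiction, whereas the paper states it directly as ``$m\mid\mathcal{F}(k)$ forces $m\le m_F$.''
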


\begin{proof}
Suppose $k \geq 6$ is an even integer. Then either $k \equiv 0 \pmod{4}$ or $k \equiv 2 \pmod{4}$.
\medskip
\newline
\textbf{Case 1}: Suppose $k \equiv 0 \pmod{4}$. Set $m_F := \FibSum$. Then by Theorem~\ref{thm: GibSum result for k cong 0,4,8}, we have $m_F = F_{k/2}$. Since $k \equiv 0 \pmod{4}$ and $k \geq 6$ is even, then $k/2 \geq 4$ is even. Thus, Identity~\eqref{eq:Pisano_of_Fib_number} of Lemma~\ref{lem:Pisano_periods_of_Fib_numbers} implies that $\pi_F(F_{k/2}) = k$. Hence we have $\pi_F(m_F) = k$. We now show that there are no larger values $m > m_F$ yielding $\pi_F(m) = k$. Recall that Theorem~\ref{thm:mfactor_iff_period_divides_k} implies
$$ \pi_F(m) \text{ divides } k \text{ if and only if } m \text{ divides } \mathcal{F}(k).$$
Hence $\mathcal{F}(k)$ is the maximum potential modulus value $m$ that could make $\pi_F(m) = k$. Since we have $\pi_F(m_F) = k$ and $m_F = \mathcal{F}(k)$, then we have achieved the maximum modulus, namely $F_{k/2}$, yielding a period of $k$ when $k \equiv 0 \pmod{4}$.
\medskip
\newline
\textbf{Case 2}: Suppose $k \equiv 2 \pmod{4}$. Set $m_F := \FibSum$. Then by Theorem~\ref{thm: GibSum result for k cong 2,6,10}, we have $m_F = L_{k/2}$. Since $k \equiv 2 \pmod{4}$ and $k \geq 6$ is even, then $k/2 \geq 3$ is odd. Thus, Identity~\eqref{eq:Pisano_of_Luc_number} of Lemma~\ref{lem:Pisano_periods_of_Fib_numbers} implies that $\pi_F(L_{k/2}) = k$. Hence we have $\pi_F(m_F) = k$. By the exact same argument given in Case 1, we know that $\mathcal{F}(k)$ is the maximum potential modulus value $m$ that could make $\pi_F(m) = k$. So we have achieved the maximum modulus, namely $L_{k/2}$, yielding a period of $k$ when $k \equiv 2 \pmod{4}$.
\end{proof}


\subsection{Computing odd-indexed Lucas numbers using \texorpdfstring{$\GibSum$}{our GCD formula} characterizations}\label{subsec:application_odd_index_Lucas_numbers}

The fact that the formulas $\gcd(G_{k+1}-G_1, G_{k+2}-G_2)$ and $\lcm\{m \mid \Gisano \text{ divides } k\}$ for $\GibSum$ coincide leads to some surprising and delightful new understandings of the Fibonacci and Lucas numbers. One such example can be garnered from looking at the the $k \equiv 2, 6, 10 \pmod{12}$ row in Table~\ref{table: main results}. Given such a $k$-value, the Lucas number $L_{k/2}$ can be computed in two new ways. One is by taking a Gibonacci sequence $\GibSeq$ with any initial relatively prime initial values $G_0$ and $G_1$, then by the first $\GibSum$ characterization, we have $L_{k/2} = \gcd(G_{k+1}-G_1, G_{k+2}-G_2)$. On the other hand, by the second $\GibSum$ characterization, we have $L_{k/2} = \lcm\{m \mid \Gisano \text{ divides } k\}$. In this subsection, we consider the first of these two ways. We have effectively established an easily computable way to find any odd-index Lucas number using any Gibonacci sequence with relatively prime initial values.

\begin{theorem}\label{thm:odd_index_Lucas_number}
Let $j$ be an odd positive integer, and suppose that $\GibSeq$ is a Gibonacci sequence with relatively prime initial values $G_0$ and $G_1$. Then the $j^{\mathrm{th}}$ Lucas number $L_j$ is given by $\mathcal{G}_{G_0, G_1}\!(2j)$. More precisely, we have
$$ L_j = \gcd\big(G_{2j+1} - G_1, \; G_{2j+2} - G_2\big).$$
\end{theorem}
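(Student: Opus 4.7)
The plan is to recognize that this theorem is essentially a repackaging of Theorem~\ref{thm: GibSum result for k cong 2,6,10} combined with the GCD characterization from Theorem~\ref{thm:Gib_Sum_Construction}, specialized to the case $k = 2j$ with $j$ odd.

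First I would verify the residue condition. Since $j$ is odd, we have $j \equiv 1, 3,$ or $5 \pmod{6}$, and hence $k := 2j \equiv 2, 6,$ or $10 \pmod{12}$. This places $k$ exactly in the regime covered by the second row of Table~\ref{table: main results}.

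Next I would invoke Theorem~\ref{thm: GibSum result for k cong 2,6,10}, which states that whenever $k \equiv 2, 6, 10 \pmod{12}$, the value $\mathcal{G}_{G_0,G_1}(k)$ equals $L_{k/2}$ regardless of the (relatively prime) initial values $G_0$ and $G_1$. Applied to $k = 2j$, this yields $\mathcal{G}_{G_0,G_1}(2j) = L_j$, which proves the first assertion of the theorem.

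Finally, to obtain the explicit gcd form, I would apply Theorem~\ref{thm:Gib_Sum_Construction}, which identifies $\mathcal{G}_{G_0,G_1}(k)$ with $\gcd(G_{k+1}-G_1, G_{k+2}-G_2)$. Substituting $k = 2j$ gives the promised identity
\[
L_j = \gcd\big(G_{2j+1} - G_1,\; G_{2j+2} - G_2\big).
\]
There is no genuine obstacle here: both of the ingredient theorems are already proven in the paper, and the only work is the residue check in the first step to confirm that $k=2j$ with $j$ odd lands in the right congruence class modulo $12$. Thus the theorem falls out as an immediate corollary of the main results in Section~\ref{sec:main_results_even_case}.
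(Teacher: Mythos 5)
Your proposal is correct and matches the paper's own proof, which likewise deduces the result from Theorem~\ref{thm: GibSum result for k cong 2,6,10} by setting $k := 2j$ and noting that $j$ odd forces $k \equiv 2 \pmod{4}$ (equivalently, $k \equiv 2, 6, 10 \pmod{12}$). The only cosmetic difference is that you phrase the residue check modulo $12$ via $j \pmod{6}$ rather than modulo $4$, and you explicitly cite Theorem~\ref{thm:Gib_Sum_Construction} for the gcd form, which the paper leaves implicit.
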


\begin{proof}
This follows from Theorem~\ref{thm: GibSum result for k cong 2,6,10} if we set $k:=2j$ and observe that $k \equiv 2 \pmod{4}$ since $j$ is odd.
\end{proof}

The latter theorem is quite surprising. It leads one to ponder if such a GCD-formulation can be discovered which yields the even-index Lucas numbers. But even more intriguing is the fact that we have our second characterization of the $\GibSum$ formula. More precisely, the odd index Lucas number $L_j$ is given by $\lcm\{m \mid \Gisano \ \text{divides} \ 2j\}$. Admittedly, computing the Lucas number $L_j$ using this LCM formulation is not as easily done as it is using Theorem~\ref{thm:odd_index_Lucas_number}, due to the fact that the periods $\Gisano$ are not easily computed. In the open questions section, we ask a question regarding this formulation.


\section{Open questions}\label{sec:open questions}

There are many avenues for further research motivated from the work in this present paper. The following open problems arose from the consideration of our $\GibSum$ characterizations and other questions related to our research.

\begin{question}
By examining our two equivalent definitions of $\GibSum$, we observe that when $k\equiv 2 \pmod{4}$ it follows that $L_{k/2}=\lcm\{m \mid \Gisano \ \text{divides} \ k\}$ for every possible choice of $G_0$ and $G_1$. Is there an intuitive reason why this must be true?
\end{question}

\begin{question}
Theorem~\ref{thm:Gibsum_factor_restrictions} establishes that no prime $p\equiv 3,7,13,17\pmod{20}$ can be a factor of $\GibSum$ if $k$ is odd. Which primes of the form $p\equiv 1,9,11,19\pmod{20}$ can be factors of $\GibSum$ when $k$ is odd? Can we place further restrictions on the possible factors of $\GibSum$ when $k$ is odd?
\end{question}

\begin{question}
Observe that for the Fibonacci and Lucas sequences we have that $\pi_F(m)$ and $\pi_L(m)$ are even for all $m>2$.  
For which initial values $G_0$ and $G_1$, does there exist a number $N$ such that $\Gisano$ is even for all $m>N$?
\end{question}

\begin{question}
To prove Theorem~\ref{thm:max_fib_modulus} for the maximum modulus value $m$ that yields a given period $k$ in the Fibonacci and Lucas settings, respectively, we relied heavily on Lemma~\ref{lem:Pisano_periods_of_Fib_numbers} which gave the Fibonacci and Lucas Pisano periods for moduli of the form $F_i$ and $L_i$. Can we generalize this lemma to provide conditions on initial values $G_0$ and $G_1$ that can help us predict the precise value of $\pi_{G_0, G_1}(G_i)$ for each $i$; that is, the generalized Pisano period of the sequence $\GibSeq$ modulo the Gibonacci number $G_i$?
\end{question}

\begin{question}
Can we extend our work to sums of $k$ consecutive squares of Gibonacci numbers? That is, for a fixed $k\in \mathbb{N}$ and initial values $G_0,G_1 \in \mathbb{Z}$, what is the value of $\GibSumSquared$, which we define to be $\gcd\left\lbrace \left( \sum_{i=0}^{k-1}G_{n+i}^2 \right)_{n \geq 1}\right\rbrace$? In the Fibonacci setting, small computational data leads to the following conjectural values of $\FibSumSquared$, which are the values $\GibSumSquared$ when $G_0=0$ and $G_1=1$: 
\begin{center}
    \begin{tabular}{|c||c|}
    \hline
    $k$ & $\FibSumSquared$\\ \hline\hline
    0 & $0 = \blue{F_0 L_0}$ \\ \hline
    4 & $1 \cdot 3 = \blue{F_2 L_2}$ \\ \hline
    8 & $3 \cdot 7 = \blue{F_4 L_4}$ \\ \hline
    12 & $2^4 \cdot 3^2 = \blue{F_6 L_6}$ \\ \hline
    16 & $3 \cdot 7 \cdot 47 = \blue{F_8 L_8}$ \\ \hline
    20 & $3 \cdot 5 \cdot 11 \cdot 41  = \blue{F_{10} L_{10}}$\\\hline
    \end{tabular}
    \hspace{.1in}
    \begin{tabular}{|c||c|}
    \hline
    $k$ & $\FibSumSquared$\\ \hline\hline
    1 & 1 \\ \hline
    5 & 1 \\ \hline
    9 & 2 \\ \hline
    13 & 1 \\ \hline
    17 & 1 \\ \hline
    21 & 2\\ \hline
    \end{tabular}
    \hspace{.1in}
    \begin{tabular}{|c||c|}
    \hline
    $k$ & $\FibSumSquared$\\ \hline\hline
    2 & $1 = \blue{F_1 L_1}$ \\ \hline
    6 & $2^3 = \blue{F_3 L_3}$ \\ \hline
    10 & $5 \cdot 11 = \red{F_5 L_5}$  \\ \hline
    14 & $13 \cdot 29 =\red{F_7 L_7}$ \\ \hline
    18 & $2^3 \cdot 17 \cdot 19 = \blue{F_9 L_9}$ \\ \hline
    22& $89 \cdot 199 = \red{F_{11} L_{11}}$\\\hline
    \end{tabular}
    \hspace{.1in}
    \begin{tabular}{|c||c|}
    \hline
    $k$ & $\FibSumSquared$\\ \hline\hline
    3 & 2 \\ \hline
    7 & 1 \\ \hline
    11 & 1 \\ \hline
    15 & 2 \\ \hline
    19 & 1 \\ \hline
    23 & 1\\ \hline
    \end{tabular}
\end{center}
The four tables above partition the possible $k$-values into residue classes modulo 4. Observe that in the third table, namely when $k \equiv 2 \pmod{4}$, we highlight in red the fact that $\FibSumSquared$ values factor into two distinct primes, namely $F_{k/2}$ and $L_{k/2}$. However it is well known that $F_n L_n = F_{2n}$. Hence, for these aforementioned $k$-values, we conjecture that $\FibSumSquared = F_{k}$. Looking closer at the conjectural $\FibSumSquared$ value when $k=18$, observe that $F_9 = 2 \cdot 17$ and $L_9 = 2^2 \cdot 19$, and their product is indeed the conjectured $\FibSumSquared$ value $2^3 \cdot 17 \cdot 19$. This occurs also for all the values in the table above for $k \equiv 0 \pmod{4}$, so the phenomena of $\FibSumSquared = F_{k}$ does seems to hold for all even $k$ values.

Further computational evidence does support the conjecture that $\FibSumSquared = F_k$ for all even $k$ values. We feel this is simply too beautiful a result to not be true. Of course, the ultimate goal would be to prove this result and extend it to the Lucas setting to find $\LucSumSquared$ and more generally $\GibSumSquared$ for any Gibonacci sequence.
\end{question}


\section{Acknowledgments}
We thank the University of Wisconsin-Eau Claire's mathematics department where much of this research was conducted. We also thank Julianna Tymoczko for her advice that helped improve the exposition in this paper. Additionally, we thank Marc Renault for helpful correspondences over email. We are also grateful for the program \texttt{Mathematica}, which helped us produce much of the raw data leading to many of our conjectures that eventually became main results in this paper. Lastly, we greatly appreciate the thorough reading given by the anonymous referee who helped us streamline some of our proofs with clearer and more succinct arguments.


\bigskip
\hrule
\bigskip

\noindent 2010 {\it Mathematics Subject Classification}:
Primary 11B39, Secondary 11A05, 11B50.

\noindent \emph{Keywords: }
Fibonacci sequence, Lucas sequence, greatest common divisor, Pisano period

\bigskip
\hrule
\bigskip

\noindent (Concerned with sequences
\seqnum{A210209} and
\seqnum{A229339}.)

\end{document}